\numberwithin{equation}{section}
\newcommand{\pd}[2]{\frac {\partial #1}{\partial #2}}
\newcommand{\La}{\Lambda}
\newcommand{\oo}{\omega}
\newcommand{\si}{\sigma}
\newcommand{\beq}{\begin{equation}}
\newcommand{\eeq}{\end{equation}}
\newcommand{\beqs}{\begin{eqnarray*}}
\newcommand{\eeqs}{\end{eqnarray*}}
\newcommand{\beqn}{\begin{eqnarray}}
\newcommand{\eeqn}{\end{eqnarray}}
\newcommand{\beqa}{\begin{array}}
\newcommand{\eeqa}{\end{array}}
\def\td{\tilde}
\def\p{\partial}
\def\RR{{\mathbb R}}
\def\CC{{\mathbb C}}
\def\ri{\rightarrow}
\def\si{\sigma}
\def\pbp{\sqrt{-1}\partial\bar\partial}
\def\cF{{\mathcal F}}
\def\cM{{\mathcal M}}
\def\i{{\sqrt{-1}}}
\def\n{\nonumber}
\newtheorem{prop}{Proposition}[section]
\newtheorem{theo}[prop]{Theorem}
\newtheorem{lem}[prop]{Lemma}
\newtheorem{claim}[prop]{Claim}
\newtheorem{rem}[prop]{Remark}
\newtheorem{defi}[prop]{Definition}
\newtheorem{q}[prop]{Question}
\title{Energy functionals and K\"ahler-Ricci solitons}
\author{Haozhao Li}
\begin{document}

\maketitle
\date{}

\bigskip

 \noindent {\bf Abstract} \ In this paper, we generalize Chen-Tian energy functionals to
K\"ahler-Ricci solitons and prove that the properness of these
functionals is equivalent to the existence of K\" ahler-Ricci
solitons. We also discuss the equivalence of the lower boundedness
of these functionals and their relation with Tian-Zhu's holomorphic
invariant.

\section{Introduction}
In \cite{[chen-tian1]}, a series of energy functionals $E_k(k=0, 1,
\cdots, n)$ were introduced by X.X. Chen and  G. Tian which were
used to prove the convergence of the K\"ahler Ricci flow under some
curvature assumptions. The first energy functional $E_0$ of this
series is exactly the $K$-energy introduced by Mabuchi in
\cite{[Ma]}, which can be defined for any K\"ahler potential
$\varphi(t)$ on a K\"ahler manifold $(M, \oo)$ as follows:
$$\frac {d}{dt}E_0(\varphi(t))=-\frac 1V\int_M\;\pd {\varphi}t(R_{\varphi}-r)\oo_{\varphi}^n.$$
Here $R_{\varphi}$ is the scalar curvature with respect to the
K\"ahler metric $\oo_{\varphi}=\oo+\pbp \varphi$,   $r=\frac
{[c_1(M)]\dot[\oo]^{n-1}}{[\oo]^n}$ is the average of $R_{\varphi}$
and $V=[\oo]^n$ is the volume.

It is well-known that the behavior of the $K$-energy plays a central
role on the existence of K\"ahler-Einstein metrics and constant
scalar curvature metrics. In \cite{[BaMa]}, Bando-Mabuchi proved
that the $K$-energy is bounded from below on a K\"ahler-Einstein
manifold with $c_1(M)>0$. It has been shown by G. Tian in
\cite{[Tian2]}\cite{[Tian3]} that $M$ admits a K\"ahler-Einstein
metric if and only if  the $K$-energy or $F$ functional defined by Ding-Tian \cite{[DT]} is proper.
Thus, it is natural
to study the relation between $E_k$ functionals and
K\"ahler-Einstein metrics. Following a question posed by Chen in
\cite{[chen1]}, Song-Weinkove studied the lower bound of energy
functionals $E_k$ on K\"ahler-Einstein manifolds. Shortly
afterwards, N. Pali     \cite{[Pali]} gave a formula between $E_1$
and the $K$-energy $E_0$, which implies $E_1$ has a lower bound if the $K$-energy is bounded from below.
Inspired by Song-Weinkove and Pali's work, we proved that the lower boundedness of $F$ functional,
the $K$-energy and $E_1$ are equivalent in the canonical K\"ahler class in \cite{[CLW]}\cite{[L2]}, and
we proved a general formula which gives the relations of all energy functionals $E_k$ in \cite{[L]}.
In \cite{[R]} Y. Rubinstein extended these results and proved all the lower boundedness and properness of $E_k$ functionals
are equivalent under some natural restrictions.

For the case of K\"ahler-Ricci solitons, Tian-Zhu generalized the
$K$-energy and $F$ fucntional  in \cite{[TZ]} and proved that these
generalized energy are bounded from below on a K\"ahler manifold
which admits a K\"ahler-Ricci soliton. In \cite{[CTZ]} Cao-Tian-Zhu
proved the properness of the generalized energy functionals.
Inspired by these work, we will define the generalized Chen-Tian
energy functionals $\td E_k$ in Section 2 and prove the following
result:

\begin{theo}\label{theo1}
Let $(M, \oo)$ be a compact K\"ahler manifold with $c_1(M)>0$ and
$\oo\in 2\pi c_1(M).$ For any $k\in \{0, 1, \cdots, n\}$ we have
\begin{enumerate}
  \item[(a)] If $\td E_k$ is proper on $\cM_{X, k}^+(\oo)$, then $M$ admits a K\"ahler-Ricci soliton with
  respect to $X$;
  \item[(b)] If  $M$ admits a K\"ahler-Ricci soliton $\oo_{KS}$, then $\td E_k$ is proper on $\cM_G(\oo_{KS})'\cap\cM_{X, k}^+(\oo_{KS}).$
  \end{enumerate}
where $\cM_{X, k}^+(\oo_{KS})$ and $\cM_G(\oo_{KS})'$ are some
subspaces of K\"ahler potentials defined in Section 2.
\end{theo}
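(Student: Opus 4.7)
The plan is to reduce both parts of the theorem to the corresponding statements for the soliton K-energy $\td E_0$, introduced by Tian-Zhu in \cite{[TZ]} and shown by Cao-Tian-Zhu in \cite{[CTZ]} to be proper whenever a K\"ahler-Ricci soliton exists. The bridge between $\td E_k$ and $\td E_0$ will be a soliton analogue of the explicit formula proved in \cite{[L]} for the ordinary Chen-Tian functionals $E_k$. Concretely, after computing $\frac{d}{dt}\td E_k(\varphi(t))$ along a path of K\"ahler potentials and integrating by parts with respect to the weighted volume form $e^{\theta_X(\varphi)}\oo_\varphi^n$ using the $X$-twisted Laplacian, I expect an identity of the shape
$$\td E_k(\varphi)=\td E_0(\varphi)+\sum_{j=1}^{k}c_{k,j}\,\cQ_j(\varphi),$$
where each $\cQ_j$ is nonnegative precisely on the admissibility cone $\cM_{X,k}^+(\oo)$ and is controlled from above by the twisted Aubin-Yau functionals of Tian-Zhu. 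Such an identity immediately yields the equivalence of the properness of $\td E_k$ and of $\td E_0$ on $\cM_{X,k}^+(\oo)$, and similarly for lower boundedness.

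Given the identity, part~(b) is essentially direct: on a manifold admitting $\oo_{KS}$, \cite{[CTZ]} provides properness of $\td E_0$ on the $G$-invariant subspace $\cM_G(\oo_{KS})'$, and the identity then transfers this to $\td E_k$ on $\cM_G(\oo_{KS})'\cap \cM_{X,k}^+(\oo_{KS})$. For part~(a) I would run the twisted continuity method of Tian-Zhu/Cao-Tian-Zhu for the family
$$\text{Ric}(\oo_\varphi)-L_X\oo_\varphi = t\,\oo_\varphi+(1-t)\,\oo,\qquad t\in[0,1],$$
restricted to $G$-invariant potentials, and show that the hypothesized properness of $\td E_k$, via the equivalent properness of $\td E_0$, yields a uniform $C^0$ bound on the solutions $\varphi_t$. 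Standard higher-order estimates combined with openness at $t=0$ and closedness then produce a solution at $t=1$, which is the desired K\"ahler-Ricci soliton with respect to $X$.

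The main obstacle is the derivation of the formula itself. In the K\"ahler-Einstein case the corresponding computation relies on the Bianchi identity together with integration by parts against $\oo_\varphi^n$. In the soliton setting every integration must be twisted by $e^{\theta_X(\varphi)}$, and the Hamiltonian potential $\theta_X(\varphi)$ itself depends on $\varphi$ through its normalization condition, so that its variation under $\p_t\varphi$ produces extra commutator and curvature terms involving $X$, $\theta_X$ and $\text{Ric}$. The heart of the argument is to show that these additional terms reorganize into a manifestly nonnegative combination on $\cM_{X,k}^+(\oo)$, which simultaneously motivates the precise definition of this cone. Once this identity is in hand, the remainder of the proof amounts to a soliton-twisted adaptation of Tian's and Cao-Tian-Zhu's continuity and compactness arguments on the $G$-invariant subspace.
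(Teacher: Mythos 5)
Your part (b) is essentially the paper's argument: properness of $\td F$ on $\cM_G(\oo_{KS})'$ from Cao--Tian--Zhu, then $\td E_0\geq \td F-C$, then the one-sided inequality $\td E_k\geq (k+1)\td E_0-C_{\oo,X,k}$ on the cone $\cM_{X,k}^+$ (the paper's Lemma \ref{lem:Ek}, which is exactly your statement that the correction terms $\cQ_j$ are nonnegative there). That half is fine.

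The gap is in part (a). Your reduction requires the \emph{other} direction: to deduce properness of $\td E_0$ from properness of $\td E_k$ you need the correction terms $\cQ_j=\td E_k-(k+1)\td E_0$ to be bounded \emph{above} by the twisted Aubin--Yau functionals on all of $\cM_{X,k}^+(\oo)$. This is not available and is not what the paper proves: by Definition \ref{defi:E} these terms are integrals of $\i\p u\wedge\bar\p u$ against curvature forms, where $u=-h_\varphi+\theta_X(\varphi)$ involves the Ricci potential of $\oo_\varphi$, and nothing on the cone controls $\int_M \i\p u\wedge\bar\p u\wedge(\cdots)$ by $I_\oo(\varphi)$ or $J_\oo(\varphi)$. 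So "the equivalent properness of $\td E_0$" is unjustified, and your continuity-method step does not close. The paper circumvents this by never leaving the continuity path $(\oo+\pbp\varphi)^n=e^{h-\theta_X(\varphi)-t\varphi}\oo^n$: along it the solution satisfies $u=(1-t)\varphi_t$, which makes the correction terms explicit and yields the bound $\sum_{i}(-1)^{k-i}\binom{k+1}{i}\td G_{k-i}(\varphi_t)\leq (1-t)^2c(n)I_\oo(\varphi_t)+C_{\oo,X,k}$; combined with the term $-(k+1)c(n)(1-t)I_\oo(\varphi_t)$ coming from integrating $\frac{d}{dt}\td E_0$ along the path, this shows $\td E_k(\varphi_t)\leq C(\oo,\varphi_0)$ for $t$ near $1$. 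Since the path lies in $\cM_{X,k}^+(\oo)$ (because $Ric_\varphi-L_X\oo_\varphi=t\oo_\varphi+(1-t)\oo>0$), the properness hypothesis applied directly to $\td E_k$ then bounds $I_\oo(\varphi_t)$, and the $C^0$ and higher-order estimates follow. You should replace the appeal to an equivalence of properness by this direct upper bound on $\td E_k$ along the path; otherwise you must supply a genuinely new two-sided estimate that the paper does not claim.
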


The idea of the proof is more or less standard. We follow the
continuity method from \cite{[R]} and \cite{[CTZ]} to prove this.
The crucial point is that by the construction of the generalized
energy functionals $\td E_k,$ all the arguments for the
K\"ahler-Einstein case work very well for our situation. Following
the results in \cite{[CLW]} and \cite{[L2]} we discuss the lower
bound of these energy functionals:

\begin{theo}\label{theo2}
Let $(M, \oo)$ be a compact K\"ahler manifold with $c_1(M)>0$ and
$\oo\in 2\pi c_1(M).$ Then for any $k\in \{0, 1, \cdots, n\}$,
$\td E_k$ is bounded from below on $\cM_{X, k}^+(\oo)$ if and only if
 $\td F$ is bounded from below on $\cM_X(\oo).$ Moreover, we have
\beq \inf_{\oo'\in\cM_{X, k}(\oo)}\td E_{k, \oo}(\oo')=
(k+1)\inf_{\oo'\in \cM_X(\oo)} \td F_{ \oo}(\oo')+C_{\oo, X, k}-
\frac {k+1}{V}\int_M\; u_0 e^{\theta_X}\oo^n,\label{eq:e1}\eeq where
$u_0=-h_{\oo}+\theta_X$ and $C_{\oo, X, k}$ is given by \beq C_{\oo,
X, k}=\sum_{i=0}^{k-1}\;(-1)^{k-i}\binom{k+1}{i}\frac 1V\int_M\;
\i\p u_0\wedge \bar\p u_0\wedge (\pbp u_0)^{k-1}\wedge
e^{\theta_X}\oo^{n-k}.\label{eq:c}\eeq
\end{theo}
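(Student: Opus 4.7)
The plan is to prove the pointwise identity (\ref{eq:e1}); once established, it immediately implies the equivalence of the lower boundedness of $\tilde E_k$ on $\cM_{X,k}^+(\oo)$ and of $\tilde F$ on $\cM_X(\oo)$, since the right-hand side of (\ref{eq:e1}) differs from $(k+1)\tilde F_\oo(\oo')$ only by a constant that depends on $\oo$, $X$, and $k$ but not on $\oo'$.

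The identity (\ref{eq:e1}) is the soliton analogue of the formula relating $E_k$ and the $K$-energy proved in \cite{[L]} for the K\"ahler--Einstein case, and my approach is to transplant that derivation with the weight $e^{\theta_X}$ inserted in every integral. Pick a smooth $X$-invariant path $\varphi(t)$ in $\cM_{X,k}^+(\oo)$ with $\varphi(0)=0$ and $\oo_{\varphi(1)}=\oo'$, and set $u(t)=-h_{\oo_{\varphi(t)}}+\theta_X(\varphi(t))$. Differentiating the definitions of Section 2 yields, on the one hand, $\frac{d}{dt}\tilde F(\varphi(t))=-\frac{1}{V}\int_M \dot\varphi\,e^{u(t)}\oo_\varphi^n$, and on the other hand, $\frac{d}{dt}\tilde E_k(\varphi(t))=-\frac{1}{V}\int_M\dot\varphi\,P_k(u(t))\,e^{\theta_X}\oo_\varphi^n$, where $P_k(u)$ is the specific polynomial in $u$ and $\pbp u$ coming from the definition of $\tilde E_k$.

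The central computation is to integrate by parts $k$ times in the expression for $\frac{d}{dt}\tilde E_k$. The key algebraic tool is the twisted closedness $\bar\p(e^{\theta_X}\oo_\varphi^n)=0$, which holds because $\theta_X$ is the Hamiltonian of the holomorphic vector field $X$ and therefore $\bar\p\theta_X=-\iota_X\oo_\varphi$ contracts to zero against $\oo_\varphi^n$. This lets every $\bar\p$-derivative land on the $u$-factors exactly as in the unweighted K\"ahler--Einstein case. Combining with the alternating-binomial identity $\sum_{i=0}^{k-1}(-1)^{k-i}\binom{k+1}{i}$ that appears in \cite{[L]} then extracts exactly $(k+1)\frac{d}{dt}\tilde F(\varphi(t))$, plus $\oo'$-independent correction integrals that assemble into the constant $C_{\oo,X,k}$. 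Integrating in $t$ from $0$ to $1$ and isolating the $t=0$ contribution produces the additional term $-\frac{k+1}{V}\int_M u_0\,e^{\theta_X}\oo^n$, completing the derivation of (\ref{eq:e1}).

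The main obstacle I anticipate is the bookkeeping in the integration-by-parts step: each application must be checked to ensure that no spurious $\bar\p\theta_X$ factor is generated and that the cross-terms produced by moving $\bar\p$ through $e^{\theta_X}$ are precisely cancelled by the Cartan-type identity $\iota_X\oo_\varphi=-\bar\p\theta_X$. Once this is verified at each step, the combinatorial arithmetic of \cite{[L]} transplants and yields the stated signs and binomial coefficients in (\ref{eq:c}).
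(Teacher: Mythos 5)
Your proposal rests on a misreading of the statement: equation (\ref{eq:e1}) is an identity between \emph{infima}, not a pointwise identity in $\oo'$, and no pointwise identity of that form can hold. Already for $k=0$ your claim would read $\td E_0(\varphi)=\td F(\varphi)-\frac 1V\int_M u_0e^{\theta_X}\oo^n$, whereas the actual relation (\ref{eq:K F}) is
$\td E_0(\varphi)=\td F(\varphi)+\frac 1V\int_M u\, e^{\theta_X(\varphi)}\oo_{\varphi}^n-\frac 1V\int_M u_0e^{\theta_X}\oo^n+\log\big(\frac 1V\int_M e^{h_{\oo}-\varphi}\oo^n\big)$,
and the two extra terms are $\varphi$-dependent; by Jensen's inequality their sum is nonnegative and vanishes only when $u$ is constant, i.e.\ at a K\"ahler--Ricci soliton. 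Likewise $\td E_k-(k+1)\td E_0$ is not a constant: it equals $C_{\oo,X,k}$ plus an integral of $\i\p u\wedge\bar\p u$ against a curvature-type form, which is only \emph{nonnegative} on $\cM_{X,k}^+(\oo)$ (this is Lemma \ref{lem:Ek}, and it genuinely uses the positivity condition defining $\cM_{X,k}^+$, not just integration-by-parts bookkeeping). So your computation can at best deliver the one-sided inequality $\td E_k\geq (k+1)\td F+C_{\oo,X,k}-\frac{k+1}{V}\int_M u_0e^{\theta_X}\oo^n$, which is the easy direction (\ref{eq:d1}).

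What is entirely missing is the hard half of the theorem: that lower boundedness of $\td E_k$ forces lower boundedness of $\td F$, together with the matching upper bound $\leq$ for the infimum in (\ref{eq:e1}). Since $\td E_k-(k+1)\td F$ is unbounded above on $\cM_{X,k}^+(\oo)$ in general, one must exhibit a minimizing sequence along which the slack terms tend to zero. The paper does this with a continuity method: for each $\psi$ it deforms $\oo$ to $\oo_s=\oo+s\pbp\psi$ and solves the family of Monge--Amp\`ere equations (\ref{theo2:eq1}); along these solutions $u=(1-t)\varphi_t$, so the correction terms are $O((1-t)^2 I(\varphi_t))$ and vanish as $t\to 1^-$ (estimate (\ref{a2})), while Claim \ref{lem:theo2} shows $\lim_{t\to 1^-}\td F_{\oo_s}(\varphi_{s,t})$ is finite and $\leq 0$ and Claim \ref{lem2:theo2} shows this limit is independent of $s$, which is what transfers the bound to an arbitrary $\psi$. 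Without some substitute for this construction your argument does not establish either the equivalence or the equality (\ref{eq:e1}).
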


Here we take the ideas from \cite{[DT]} to prove Theorem
\ref{theo2}. For the energy functionals $E_k$, there are two
different ways to prove their equivalence. In \cite{[CLW]} we use
the K\"ahler-Ricci flow to prove the equivalence of the lower
boundedness of the $K$ energy and $E_1$ energy, and in \cite{[L2]}
we use Perelman's estimates  to prove the equivalence of the $K$
energy and $F$ functional. The flow method is very tricky and we
lack some crucial estimates  here.  In \cite{[R]} Y. Rubinstein
proved that the equivalence of the lower boundedness of the energy
functionals $E_k$ and $F$, which relies on an interesting
observation on the relation of $E_n$ and $F$(cf. Lemma 2.4 in
\cite{[R]}). Here it seems difficult to find such a relation in the
case of generalized energy functionals. Fortunately, we can use the
continuity method in \cite{[DT]} to overcome these difficulties.\\

As a by-product of Theorem \ref{theo2}, we have the following
result:

\begin{theo}\label{theo3}Let $M$ be a compact K\"ahler manifold with $c_1(M)>0$ and
$\oo$ be any given K\"ahler metric in $ 2\pi c_1(M).$ If $\td F$ is bounded from below for the solution $\varphi_t$ of the
equation
\beq (\oo+\pbp\varphi)^n=e^{h_{\oo}-\theta_X(\varphi)-t\varphi},\label{b1}\eeq
then $\td F$ is bounded from below in the class $2\pi
c_1(M).$ Moreover, we have
$$\inf_{\oo'\in 2\pi c_1(M)}\td F_{\oo}(\oo')=\inf_{t\in [0, 1)}\td F_{\oo}(\varphi_t).$$
Similar results also hold for $\td E_k$ on $\cM_{X, k}^+(\oo)$ with $k=0, 1, \cdots, n.$
\end{theo}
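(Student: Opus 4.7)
The plan is to adapt the Ding-Tian continuity-method argument from \cite{[DT]} (indicated by the author) to the soliton setting, using the Tian-Zhu framework of \cite{[TZ]} and \cite{[CTZ]}. One direction is trivial: since each $\varphi_t\in 2\pi c_1(M)$, one has $\inf_{t\in[0,1)}\td F_\omega(\varphi_t)\ge \inf_{\omega'\in 2\pi c_1(M)}\td F_\omega(\omega')$. The substantive content is the reverse inequality $\td F_\omega(\psi)\ge \inf_{t\in[0,1)}\td F_\omega(\varphi_t)$ for every $\psi$ in the class.

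First, I would establish monotonicity $\frac{d}{dt}\td F_\omega(\varphi_t)\le 0$ on $[0,1)$. Differentiating (\ref{b1}) in $t$ produces an elliptic equation for $\dot\varphi_t$ on $(M,\omega_{\varphi_t})$; substituting into the first-variation formula for $\td F_\omega$ (whose Euler-Lagrange quantity is precisely the soliton operator of \cite{[TZ]}), the expression collapses after integration by parts against the weighted measure $e^{\theta_X(\varphi_t)}\omega_{\varphi_t}^n$ to a manifestly non-positive integrand. Consequently $\inf_{t\in[0,1)}\td F_\omega(\varphi_t)=\lim_{t\to 1^-}\td F_\omega(\varphi_t)$, which is finite by hypothesis.

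For the reverse inequality I would introduce, for each $t\in (0,1)$, an auxiliary functional $\td F_{\omega,t}$ whose Euler-Lagrange equation is precisely (\ref{b1}) and which limits to $\td F_\omega$ as $t\to 1^-$. Modelled on Ding's original construction, the natural candidate is
$$
\td F_{\omega,t}(\psi)\;=\;\td F^{0}_{\omega,X}(\psi)\;-\;\tfrac{1}{t}\log\left(\tfrac{1}{V}\int_M e^{h_\omega-\theta_X(\psi)-t\psi}\omega^n\right),
$$
where $\td F^{0}_{\omega,X}$ is the Tian-Zhu weighted Aubin-Yau piece of $\td F_\omega$. A convexity computation along linear paths between $\psi$ and $\varphi_t$, parallel to Ding's in \cite{[DT]}, should show that $\varphi_t$ is a global minimizer of $\td F_{\omega,t}$ on the appropriately normalized subclass, giving $\td F_{\omega,t}(\psi)\ge \td F_{\omega,t}(\varphi_t)$. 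Passing $t\to 1^-$ and using $\td F_{\omega,t}\to\td F_\omega$ pointwise on normalized potentials, combined with the monotonicity of the previous step, yields $\td F_\omega(\psi)\ge\inf_t\td F_\omega(\varphi_t)$. The statement for $\td E_k$ I would obtain by running the same scheme directly on $\td E_{k,\omega}$, with an auxiliary functional accommodating the $(k+1)$-factor and the constant $C_{\omega,X,k}$ from (\ref{eq:c}); alternatively, the formula (\ref{eq:e1}) of Theorem~\ref{theo2} can be used to transport the $\td F$-conclusion to $\td E_k$ once the pointwise identity between $\td E_k(\varphi_t)$ and $\td F(\varphi_t)$ along the continuity path is verified.

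The main obstacle is verifying the minimizer property of $\td F_{\omega,t}$: the non-trivial weight $e^{\theta_X(\psi)}\omega_\psi^n$ breaks the manifest convexity of the K\"ahler-Einstein case in \cite{[DT]}, and the non-pointwise $\theta_X(\psi)$-dependence in the Euler-Lagrange expression must be tracked carefully. One also needs uniform control on the error $\td F_{\omega,t}-\td F_\omega$ in terms of the Tian-Zhu normalization of $\psi$ to justify the limit as $t\to 1^-$; without such uniformity the monotonicity argument would only give the inequality up to an uncontrolled constant.
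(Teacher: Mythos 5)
Your outline gets the easy direction and the monotonicity/limit bookkeeping right, but the step that carries all the weight --- that $\varphi_t$ is a \emph{global} minimizer of the auxiliary functional $\td F_{\oo,t}$ --- is not established, and the justification you sketch for it does not work. The functional $\td F_{\oo,t}$ is the difference of two pieces: the weighted Aubin--Yau part $\td F^0$, whose second variation along a linear path $u\mapsto u\psi$ is $+\frac1V\int_M |\bar\p\psi|^2\,e^{\theta_X(\cdot)}\oo_{\cdot}^n\geq 0$, and the term $-\frac1t\log\big(\frac1V\int_M e^{h_\oo-\theta_X-X(\psi)-t\psi}\oo^n\big)$, which is \emph{concave} in $\psi$ along linear paths (log of an integral of an exponential of a linear functional is convex). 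So along linear segments the functional has no sign on its Hessian, and ``a convexity computation along linear paths, parallel to Ding's'' cannot yield the minimizer property. What would yield it is convexity along geodesics in the space of K\"ahler potentials (Berndtsson-type subharmonicity for the weighted Ding functional), but that is heavy machinery, is not what is in \cite{[DT]}, and in the soliton-weighted setting is not something you can wave at; you correctly flag this as ``the main obstacle'' but then the proof simply does not close. As written, the reverse inequality $\td F_\oo(\psi)\geq\inf_t\td F_\oo(\varphi_t)$ is not proved.

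The paper's proof avoids any minimizer or convexity statement. Given $\psi$, it sets $\oo_s=\oo+s\pbp\psi$ and solves the two-parameter family $(\oo_s+\pbp\varphi)^n=e^{h_s-\theta_s(\varphi)-t\varphi}\oo_s^n$. After the normalization $\hat\varphi_{s,t}=s\psi+\varphi_{s,t}-c_s$, the $s$-derivative of the reduced functional satisfies
\begin{equation*}
\pd{}{s}\Big(\td J_{\oo}(\hat\varphi_{s,t})-\tfrac 1V\int_M \hat\varphi_{s,t}e^{\theta_X}\oo^n\Big)
=-\frac{1-t}{tV}\int_M\Big(\psi-\tfrac{dc_s}{ds}\Big)\oo^n_{s,t},
\end{equation*}
i.e.\ it carries an explicit factor $(1-t)$. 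This single identity does both jobs at once: it gives a lower bound for $\td F^0_{\oo_s}(\varphi_{s,t})$ uniform in $s$ and $t$, which is what lets the solution $\varphi_{s,t}$ be continued to all $t\in[0,1)$ for every $s$ (your scheme never addresses solvability of any deformed problem, but the paper needs it and gets it for free here); and it shows that $\lim_{t\to1^-}\td F_{\oo}(\oo_{s,t})$ is independent of $s$. Combining this with $\lim_{t\to1^-}\td F_{\oo_s}(\varphi_{s,t})\leq 0$ (from $\td J-\frac1V\int\varphi e^{\theta_X}\oo^n=-\frac1t\int_0^t(\td I-\td J)d\tau$ and the vanishing of the $\log$ term, which requires the estimate $\lim_{t\to 1^-}(1-t)\|\varphi_{s,t}\|_{C^0}=0$) yields $\td F_\oo(\psi)=\td F_\oo(\oo_1)\geq\lim_{t\to1^-}\td F_\oo(\oo_{1,t})=\lim_{t\to1^-}\td F_\oo(\varphi_{0,t})$. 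If you want to salvage your write-up, this deformation-in-$s$ argument is the missing idea to replace the unproven minimizer claim; the treatment of $\td E_k$ then goes through Theorem \ref{theo2} essentially as you indicate.
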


This result is inspired by the beautiful work \cite{[Chen4]}. In
\cite{[Chen4]} X. X. Chen proved that if the $K$-energy is bounded
from below and the infimum of the Calabi energy vanishes along a
particular geodesic ray, then the $K$-energy is bounded from below
in the K\"ahler class. As an application, he essentially proved that
for any K\"ahler class admits constant scalar curvature metric, the
$K$-energy in a nearby K\"ahler class with possibly different
complex structure is bounded from below. We remark that under the
assumption of Theorem \ref{theo3}, the solution $\varphi_t$ of
(\ref{b1}) will exist for all $t\in [0, 1)$. Theorem \ref{theo3}
shows that if the $K$-energy is bounded along one solution
$\varphi_t$, then the $K$-energy is bounded from below in the whole
K\"ahler class. It is interesting to know whether there is a similar
phenomenon for the K\"ahler-Ricci flow:

\begin{q}\label{q}If the $K$-energy is bounded from below along a certain
K\"ahler-Ricci flow, is the $K$-energy  bounded from below in
the class $2\pi c_1(M)$?

\end{q}

In \cite{[TZ]}, Tian-Zhu introduced a new holomorphic invariant $\cF_X(\cdot)$ from the space
of holomorphic vector fields $\eta(M)$ into $\CC$:
\beq
\cF_X(Y)=\int_M\; Y(h_g-\theta_X(g)) e^{\theta_X(g)}\,\oo_g^n,\quad Y\in \eta(M).\n
\eeq
The invariant $\cF_X(\cdot)$ is defined for any holomorphic vector fields $X\in \eta(M)$ and it is independent
of the choice of $g$ with the K\"ahler class $\oo_g\in 2\pi c_1(M).$ When $X=0$, $\cF_X(\cdot)$
is exactly the Futaki invariant. By the definition, we see that $\cF_X$ is an obstruction to the
existence of K\"ahler-Ricci solitons.  The next result shows that the holomorphic invariants defined by $\td E_k$ are scalar multiples of $\cF_X(\cdot)$,
which generalized the results for energy functionals $E_k$(cf. \cite{[Liu]}\cite{[L]}).

\begin{theo} Let $Y$ be a holomorphic vector field and $\{\Phi(t)\}_{|t|<\infty}$ the one-parameter
subgroup of automorphisms induced by $Re(Y),$ we have
$$\frac d{dt}\td E_k(\varphi)=\frac {(k+1)n}V\cF_X(Y),$$
where $\varphi$ is given by $\Phi^*_t\oo=\oo+\pbp\varphi.$
\end{theo}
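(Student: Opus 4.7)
The plan is to differentiate $\td E_k(\varphi(t))$ directly using its variational formula from Section 2, specialized to the family $\varphi(t)$ determined by $\Phi_t^*\oo=\oo+\pbp\varphi(t)$.

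First, I would identify $\dot\varphi(t)$ as (the real part of) a holomorphy potential for $Y$ with respect to $\oo_{\varphi(t)}$. Differentiating $\Phi_t^*\oo=\oo_{\varphi(t)}$ in $t$ yields $\pbp\dot\varphi(t)=\Phi_t^*(\cL_{{\rm Re}(Y)}\oo)$, so by the $\pbp$-lemma $\dot\varphi(t)$ equals the pullback $\Phi_t^*\theta_Y$ of the holomorphy potential of $Y$ with respect to $\oo$, up to an additive constant fixed by the usual normalization.

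Second, because $\Phi_t$ is a biholomorphism, every natural tensor built from $\oo_{\varphi(t)}$ is a $\Phi_t$-pullback of the same tensor built from $\oo$; in particular $h_{\varphi(t)}=\Phi_t^*h_{\oo}$ and $\theta_X(\varphi(t))=\Phi_t^*\theta_X$. Substituting $\dot\varphi(t)=\Phi_t^*\theta_Y$ into the variational formula for $\frac{d}{dt}\td E_k$ and changing variables by $\Phi_t^{-1}$, each integral reduces to one over $(M,\oo)$ whose integrand involves the action of $Y$ on canonical objects of $\oo$. Integration by parts against the weighted measure $e^{\theta_X}\oo^n$, together with $[\cL_Y,\pbp]=0$ and the Tian-Zhu identity that $Y\mapsto \int_M Y(h_{\oo}-\theta_X)\,e^{\theta_X}\oo^n$ is a holomorphic invariant independent of the metric in $2\pi c_1(M)$, then collapses the full derivative onto a single multiple of $\cF_X(Y)$.

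The main obstacle is bookkeeping the coefficient $(k+1)n/V$ and verifying that every other contribution cancels. The functional $\td E_k$ is a binomially weighted sum of integrals of the type $\int(\pbp u)^j\wedge\oo^{n-j}\wedge(\cdot)\,e^{\theta_X}$, as already visible from the constant $C_{\oo,X,k}$ in \eqref{eq:c}; differentiating each term and integrating by parts produces many correction terms involving $\pbp\theta_X$ and $\pbp h_{\oo}$. One has to verify that the weighted Hodge-type identities adapted to the measure $e^{\theta_X}\oo^n$ force all of these to telescope, leaving precisely the coefficient $(k+1)n/V$ in front of $\cF_X(Y)$. In the Chen-Tian case ($X=0$) this cancellation is the content of \cite{[Liu]} and \cite{[L]}, and the same combinatorial skeleton should transfer to the weighted setting because each elementary identity used there (Stokes' theorem, commutation of $\p$, $\b\p$, and $\cL_Y$) has a natural $e^{\theta_X}$-weighted analogue.
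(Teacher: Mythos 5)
Your overall strategy coincides with the paper's: differentiate along the orbit of $\Phi_t$, use $\pd{\varphi}{t}=Re(\theta_Y(\varphi))+c$ together with the resulting evolution $\pd ut=Re(Yu)+c$ of $u=-h_{\varphi}+\theta_X(\varphi)$, and then integrate by parts against the weighted measure $e^{\theta_X(\varphi)}\oo_{\varphi}^n$. But the decisive step is exactly the one you defer to ``the same combinatorial skeleton should transfer,'' and that is where the entire content of the proof lies. Two concrete remarks. First, you would simplify the bookkeeping enormously by exploiting the decomposition $\td E_k=(k+1)\td E_0+\sum_{i=0}^{k-1}(-1)^{k-i}\binom{k+1}{i}\td G_{k-i,\oo}$ from Definition 2.3: the paper first proves $\frac d{dt}\td E_0=\frac nV\cF_X(Y)$ and then reduces the theorem to the single claim that $\frac d{dt}\td G_j(\varphi)=0$ for every $j$, i.e.\ that each correction functional is constant along the orbit of the automorphism. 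This yields the factor $(k+1)$ for free and eliminates all interaction between the binomial terms; in your formulation the coefficient $(k+1)n/V$ must be extracted from a sum of many mutually cancelling contributions, which is needlessly delicate.

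Second, the weighted cancellation is genuinely not automatic from the $X=0$ case of \cite{[Liu]} and \cite{[L]}. Differentiating $\td G_j$ produces, beyond the unweighted terms, a new contribution $-\frac 1VRe\int_M X\theta_Y(\varphi)\,\i\p u\wedge\bar\p u\wedge(\pbp u)^{j-1}\wedge e^{\theta_X(\varphi)}\oo_{\varphi}^{n-j}$ coming from $\frac d{dt}e^{\theta_X(\varphi)}$, and every integration by parts against $e^{\theta_X(\varphi)}\oo_{\varphi}^n$ picks up extra terms involving $\p\theta_X(\varphi)$. The paper disposes of all of these by writing down two identities obtained by contracting $Y$ into forms that vanish identically for bidegree reasons, namely $i_Y\big(\p u\wedge(\pbp u)^{j}\wedge e^{\theta_X(\varphi)}\oo_{\varphi}^{n-j}\big)$ and $i_Y\big(\p\theta_X(\varphi)\wedge\i\p u\wedge\bar\p u\wedge(\pbp u)^{j-1}\wedge e^{\theta_X(\varphi)}\oo_{\varphi}^{n-j}\big)$, and by invoking $Y\theta_X(\varphi)=X\theta_Y(\varphi)$; only after matching all the resulting terms does everything cancel. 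As written, your proposal asserts rather than proves the step on which the theorem rests; to complete it you should either carry out this matching explicitly or, better, reduce to $\frac d{dt}\td G_j=0$ and prove that statement.
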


{\bf Acknowledgements}: The author  would like to thank  Professor X. X. Chen, W. Y. Ding and F. Pacard for their constant,
warm encouragements over the past several years.

\section{Energy functionals}
In this section, we recall some energy functionals introduced by
Tian-Zhu in \cite{[TZ]} and give the definition of the generalized
Chen-Tian energy function.

 Let $M $ be an $n$-dimensional compact
K\"ahler manifold with positive first Chern class, and $\oo$ be a
fixed K\"ahler metric in the K\"ahler class $2\pi c_1(M).$ Then
there is a smooth real-valued function $h_{\oo}$ such that
$$Ric(\oo)-\oo=\pbp h_{\oo}, \quad \int_M\; (e^{h_{\oo}}-1)\oo^n=0.$$
Suppose that $X$ is a holomorphic vector field on $M$ so that the
integral curve of $K_X$ of the imaginary part $Im(X)$ of $X$
consists of isometries of $\oo$. By the Hodge decomposition theorem,
there exists a unique smooth real-valued function $\theta_X$ on $M$
such that
$$i_X\oo=\i \bar\partial \theta_X, \quad \int_M\; (e^{\theta_X}-1)\oo^n=0.$$
Now we define the space of K\"ahler potentials which are invariant
under $Im(X):$
$$\cM_X(\oo)=\{\varphi\in C^{\infty}(M, \RR)\;|\; \oo_{\varphi}=\oo+\pbp \varphi>0,\;\; Im(X)(\varphi)=0\;\}.$$
 Tian-Zhu in \cite{[TZ]} introduced the
following functional, which can be regarded as a generalization of
Mabuchi's $K$ energy, \beq \td E_{0, \oo}(\varphi)=\frac
{n}{V}\int_0^1\int_M\;\i\p \pd {\varphi_t}t\wedge \bar \p
(h_{\varphi_t}-\theta_X(\varphi_t))\wedge e^{\theta_X(\varphi_t)}
\oo_{\varphi_t}^{n-1}\wedge dt\label{eq:K energy}\eeq
 where
$\varphi_t(t\in [0, 1])$ is a path connecting $0$ and $\varphi$ in
$\cM_X(\oo)$ and $$\theta_X(\varphi_t)=\theta_X+X(\varphi_t)$$ is
the potential function of $X$ with respect to the metric
$\oo_{\varphi_t}.$ We define the following functionals on
$\cM_X(\oo)$:
$$\td I_{\oo}(\varphi)=\frac 1V\int_M\; \varphi(e^{\theta_X}\oo^n-e^{\theta_X(\varphi)}\oo_{\varphi}^n),$$
and
$$\tilde J_{\oo}(\varphi)=\frac 1V\int_0^1\;\int_M\; \pd {\varphi_t}{t}(e^{\theta_X}\oo^n-e^{\theta_X(\varphi_t)}
 \oo_{\varphi_t}^n
)\wedge dt.$$ As before, $\varphi_t(t\in [0, 1])$ is a path
connecting $0$ and $\varphi$ in $\cM_X(\oo).$ Then for any path
$\varphi_t$, we have
$$\frac d{dt}(\td I_{\oo}(\varphi_t)-\td J_{\oo}(\varphi_t))=
-\frac 1V\int_M\;\pd {\varphi}t(\Delta_{\varphi_t}+X)\varphi_t\,
e^{\theta_X(\varphi_t)}\oo_{\varphi_t}^n.$$ Then, by Lemma 3.1 in
\cite{[CTZ]} there exist two positive constants $c_1(n)$ and
$c_2(n)$ such that \beq c_1I_{\oo}(\varphi)\leq \td
I_{\oo}(\varphi)-\td J_{\oo}(\varphi)\leq c_2
I_{\oo}(\varphi),\label{I inequality}\eeq where \beq
I_{\oo}(\varphi)=\frac
1V\int_M\;\varphi(\oo^n-\oo_{\varphi}^n).\label{I1}\eeq In
\cite{[TZ]} Tian-Zhu defined the generalized $F$ functional which is
defined by Ding-Tian in \cite{[DT]} as follows \beq \tilde
F_{\oo}(\varphi)=\tilde J_{\oo}(\varphi)-\frac 1V\int_M\;\varphi
e^{\theta_X} \oo^n-\log \Big(\frac 1V\int_M\;
e^{h_{\oo}-\varphi}\oo^n\Big).\label{defi F}\eeq The generalized $F$
functional has exactly the same behavior as in the K\"ahler-Einstein
case. For example, the generalized $K$-energy and $\td F$ functional
are related by the identity (cf. \cite{[TZ]}) \beq \td
E_0(\varphi)=\td F(\varphi)+ \frac 1V\int_M\; u
e^{\theta_X(\varphi)}\oo_{\varphi}^n-\frac 1V\int_M\; u_0
e^{\theta_X}\oo^n+\log\Big(\frac 1V\int_M\;
e^{h_{\oo}-\varphi}\oo^n\Big), \label{eq:K F}\eeq where $u$ is
defined by
 \beq
u(\varphi)=-h_{\varphi}+\theta_X(\varphi)=\log\frac
{\oo_{\varphi}^n}{\oo^n}+\varphi
-h_{\oo}+\theta_X(\varphi),\label{u:defi}\eeq and
$u_0=-h_{\oo}+\theta_X$. It follows  that $\td E_0$ is always bigger
than $\td F$ up to a constant: \beq \td E_0(\varphi)\geq\td
F(\varphi)-\frac 1V\int_M\; u_0 e^{\theta_X}\oo^n. \label{ineq:E0
F}\eeq

Now we recall some  results in \cite{[CTZ]}.
 Let $K_0(\supset K_X)$ be a maximum
compact subgroup of the automorphisms group of $M$ such that
$\si\cdot \eta=\eta\cdot \si$ for any $\eta\in K_0$ and any $\si\in
K_X.$ If $\oo_{KS}$ is a K\"ahler-Ricci soliton with respect to the
holomorphic vector field $X$, we define the inner product by
$$(\varphi, \psi)=\int_M\; \varphi\psi e^{\theta_X(\oo_{KS})}\oo_{KS}^n,$$
and denote by
$$\La_1(\oo_{KS})=\{u\in C^{\infty}\;|\; \Delta_{KS}u+X(u)=-u\}.$$
For any compact subgroup $G\supset K_X$ of $K_0$ with  $\si\cdot
\eta=\eta\cdot \si$ for any $\eta\in G$ and any $\si\in K_X,$ we
denote by $\cM_G(\oo_{KS})'$ the space of $G$-invariant K\"ahler
potentials perpendicular to $\La_1(\oo_{KS}).$ We call a functional
$F(\varphi)$  proper on $\cM_X(\oo)$, if there exists an increasing
function $\rho: \RR\ri \RR$ satisfying $\lim_{t\ri
+\infty}\rho(t)=+\infty$ such that for any $\varphi\in \cM_X(\oo)$,
$F(\varphi)\geq \rho(I_{\oo}(\varphi))$, where $I_{\oo}(\varphi)$ is
given by (\ref{I1}).

 In
\cite{[CTZ]}, Cao-Tian-Zhu proved the following result, which is
crucial in the proof of Theorem \ref{theo1}.

\begin{theo}(cf. \cite{[CTZ]})\label{theo:CTZ} If $M$ admits a K\"ahler-Ricci soliton, then $\td F$ is proper
on $\cM_G(\oo_{KS})'$.
\end{theo}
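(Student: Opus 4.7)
The plan is to adapt Tian's continuity-method proof of the properness of the $F$-functional in the K\"ahler-Einstein case to K\"ahler-Ricci solitons, as carried out by Cao-Tian-Zhu. For any $\varphi\in \cM_G(\oo_{KS})'$, I would first connect it to the soliton by a smooth $G$-invariant path $\varphi_s$, $s\in [0,1]$, solving a one-parameter deformation of the twisted Monge-Amp\`ere equation satisfied by the soliton, with $\varphi_0=0$ and $\varphi_1=\varphi$. Existence of such a path on $[0,1]$ requires standard openness arguments together with uniform a priori $C^0$ estimates along the path.

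Then I would compute $\frac{d}{ds}\td F_{\oo_{KS}}(\varphi_s)$ using the definition (\ref{defi F}) and the Monge-Amp\`ere equation. Integration by parts, together with the identity $i_X\oo_{\varphi_s}=\i\bar\partial\theta_X(\varphi_s)$, reduces this derivative to a weighted Dirichlet-type expression for $\dot\varphi_s$ involving the modified Laplacian $\Delta_{\varphi_s}+X$, in direct analogy with the Bando-Mabuchi calculation. The key analytic input is that $\Delta_{KS}+X$ has $-1$ as an eigenvalue with eigenspace exactly $\La_1(\oo_{KS})$, and on the $G$-invariant complement of $\La_1(\oo_{KS})$ there is a strict spectral gap
$$\int_M\eta\bigl(-\Delta_{KS}\eta-X(\eta)-\eta\bigr)\,e^{\theta_X}\oo_{KS}^n \;\geq\; \delta\int_M\eta^2\,e^{\theta_X}\oo_{KS}^n$$
for some $\delta>0$ depending only on $\oo_{KS}$ and $G$. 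Propagating this gap along the continuity path (using that $G$-invariance and perpendicularity to $\La_1$ are preserved in $s$) and combining with a uniform Yau-type $C^0$ estimate yields a generalized Moser-Trudinger inequality, which when expressed in terms of (\ref{defi F}) and combined with (\ref{I inequality}) is precisely the properness of $\td F$ on $\cM_G(\oo_{KS})'$.

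The main obstacle is handling the asymmetry introduced by the holomorphic vector field $X$: the operator $\Delta+X$ is self-adjoint only with respect to the weighted measure $e^{\theta_X}\oo^n$, so the spectral theory, Poincar\'e inequality, and integration-by-parts identities must all be developed in this weighted framework. Propagating the spectral gap along the continuity path requires continuity of the weight and the reference metric, and one must verify that $G$-invariance together with the restriction to $\cM_G(\oo_{KS})'$ forces the kernel of $\Delta_{KS}+X+1$ to coincide exactly with $\La_1(\oo_{KS})$, with no additional zero modes appearing. Finally, the $C^0$ estimate itself is more delicate than in the K\"ahler-Einstein case because the twisted measure changes along the deformation, and one has to exploit Zhu's soliton version of Yau's estimate. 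These technical points are precisely what \cite{[CTZ]} handles; granting them, the argument follows the K\"ahler-Einstein template via the Ding-Tian approach.
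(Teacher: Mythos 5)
The paper offers no proof of this theorem: it is quoted verbatim from Cao--Tian--Zhu \cite{[CTZ]} as known input, so there is nothing internal to compare your argument against. Your outline correctly identifies the strategy actually used in \cite{[CTZ]} (continuity method, a generalized Moser--Trudinger inequality, and the spectral gap of $\Delta_{KS}+X$ on the weighted complement of $\La_1(\oo_{KS})$), so it matches the source the paper relies on, though as written it defers all the substantive estimates to that same reference rather than proving them.
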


Inspired by the work in \cite{[TZ]} and  \cite{[L]}, we
define the generalized Chen-Tian energy functionals $\tilde E_k$ as
follows:
\medskip

\begin{defi}\label{defi:E}
We define the generalized Chen-Tian energy functionals for any $k=1,
2, \cdots, n,$
$$\tilde
E_{k, \oo}(\varphi)=\sum_{i=0}^{k-1}\;(-1)^{k-i}\binom{k+1}{i}\tilde
G_{k-i, \oo}(\varphi)+(k+1)\tilde E_0(\varphi),$$ where \beqs \tilde
G_{k, \oo}(\varphi) &=&-\frac 1V\int_M\; \i\p u\wedge \bar\p u\wedge
(\pbp
u)^{k-1}\wedge e^{\theta_X(\varphi)}\oo_{\varphi}^{n-k}\\
&&+\frac 1V\int_M\; \i\p u_0\wedge \bar\p u_0\wedge (\pbp
u_0)^{k-1}\wedge e^{\theta_X}\oo^{n-k}, \eeqs
where $u=-h_{\varphi}+\theta_X(\varphi)$ and $u_0=-h_{\oo}+\theta_X.$
\end{defi}

\begin{rem}For Chen-Tian energy functionals $E_k$, there are many
different expressions as in \cite{[chen-tian1]} \cite{[SoWe]} and
\cite{[R]}. It is interesting how to write the generalized
functionals $\td E_k$ as similar expressions.

\end{rem}

By the definition, it is easy to check that all of $\td E_k$ satisfy
the following cocycle condition
$$\td E_{k, \oo}(\varphi)+\td E_{k, \oo_{\varphi}}(\psi-\varphi)=\td E_{k, \oo}(\psi),$$
for any $\varphi, \psi \in \cM_X(\oo).$ Let $k=1,$  we have the
generalized Pali's formula: \beqn \td E_1(\varphi)&=&2\td
E_0(\varphi)+\frac 1V\int_M\; \i \p u\wedge \bar \p u\wedge
e^{\theta_X(\varphi)}\oo^{n-1}_{\varphi} -\frac 1V\int_M\; \i \p
u_0\wedge \bar \p u_0\wedge
e^{\theta_X}\oo^{n-1}\n\\
&\geq &2\td E_0(\varphi)-C_{\oo, X}.\label{eq:E0 E_1} \eeqn Now we
define the subspace of K\"ahler potentials for $k=2, 3, \cdots, n$
\beq \cM_{X, k}^+(\oo)=\{\varphi\in
\cM_X(\oo)\;|\;Ric_{\varphi}-L_X\oo_{\varphi}\geq -\frac
2{k-1}\oo_{\varphi}\},\label{M defi}\eeq and let $\cM_{X,
0}^+(\oo)=\cM_{X, 1}^+(\oo)=\cM_{X}(\oo).$ The definition of (\ref{M
defi}) is inspired by the result in \cite{[L]}. With these
notations, we have the result:

\begin{lem}\label{lem:Ek}For any $\oo_{\varphi}\in \cM_{X, k}^+(\oo)(k\geq 2),$ we have
$$\td E_k(\varphi)\geq (k+1)\td E_0(\varphi)-C_{\oo, X, k},$$
where $C_{\oo, X, k}$ is given by (\ref{eq:c}).
\end{lem}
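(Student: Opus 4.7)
The plan is to adapt the K\"ahler-Einstein argument of \cite{[L]} for the Chen-Tian functionals $E_k$ to the soliton setting, with the substitution $u=-h_\varphi+\theta_X(\varphi)$ playing the role that $-h_\varphi$ plays in \cite{[L]}. By Definition \ref{defi:E},
$$\td E_k(\varphi)-(k+1)\td E_0(\varphi)=\sum_{i=0}^{k-1}(-1)^{k-i}\binom{k+1}{i}\,\td G_{k-i,\oo}(\varphi),$$
and each $\td G_{k-i}$ splits naturally into a moving piece built from $(u,\oo_\varphi,e^{\theta_X(\varphi)})$ and a reference piece built from $(u_0,\oo,e^{\theta_X})$. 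The sum of the reference pieces is exactly the combinatorial constant $C_{\oo,X,k}$ of (\ref{eq:c}) (up to the overall sign fixed by that formula), so after reindexing $j=k-i$ the lemma reduces to the positivity statement
$$\frac{1}{V}\int_M\sum_{j=1}^{k}(-1)^{j+1}\binom{k+1}{j+1}\,\i\p u\wedge\bar\p u\wedge(\pbp u)^{j-1}\wedge e^{\theta_X(\varphi)}\oo_\varphi^{n-j}\;\ge\;0.$$

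The hypothesis $\oo_\varphi\in\cM_{X,k}^+(\oo)$ is precisely tailored for this. Using $\mathrm{Ric}(\oo_\varphi)-\oo_\varphi=\pbp h_\varphi$ together with $L_X\oo_\varphi=\pbp\theta_X(\varphi)$, a direct computation gives
$$\pbp u\;=\;\oo_\varphi-\mathrm{Ric}(\oo_\varphi)+L_X\oo_\varphi,$$
so the condition $\mathrm{Ric}(\oo_\varphi)-L_X\oo_\varphi\ge-\tfrac{2}{k-1}\oo_\varphi$ reads $\pbp u\le\tfrac{k+1}{k-1}\oo_\varphi$. Diagonalising $\pbp u$ with respect to $\oo_\varphi$ at an arbitrary point, with eigenvalues $\la_1,\ldots,\la_n\le\tfrac{k+1}{k-1}$, the integrand above reduces, up to the positive factor $e^{\theta_X(\varphi)}\oo_\varphi^n$, to a sum of non-negative gradient quantities each multiplied by an alternating combination of elementary symmetric polynomials in the $\la_q$.

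The main obstacle is the pointwise non-negativity of this alternating combination on the slab $\la_q\le\tfrac{k+1}{k-1}$. However, the binomial weights $\binom{k+1}{i}$ in Definition \ref{defi:E} and the constant $\tfrac{2}{k-1}$ in (\ref{M defi}) have been calibrated precisely so that this combination coincides with the one already appearing in \cite{[L]} for the K\"ahler-Einstein case, where a Newton-Maclaurin-type induction establishes the required inequality. The smooth positive weight $e^{\theta_X(\varphi)}$ plays no pointwise role and is harmless in the integration, so the inequality of \cite{[L]} transports verbatim and delivers $\td E_k(\varphi)\ge(k+1)\td E_0(\varphi)-C_{\oo,X,k}$.
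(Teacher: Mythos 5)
Your proposal follows essentially the same route as the paper: reduce $\td E_k-(k+1)\td E_0$ to the moving-part alternating sum plus the constant $C_{\oo,X,k}$, translate the hypothesis $\oo_\varphi\in\cM_{X,k}^+(\oo)$ into the bound $\pbp u\leq \frac{k+1}{k-1}\oo_\varphi$, and invoke the positivity already established in \cite{[L]}. The paper just makes the last step explicit, rewriting the sum via the identity $\sum_{i=0}^{k-1}\binom{k+1}{i}(Ric_\varphi-L_X\oo_\varphi-\oo_\varphi)^{k-i-1}\wedge\oo_\varphi^{i}=\sum_{i=1}^{k}i\,(Ric_\varphi-L_X\oo_\varphi)^{k-i}\wedge\oo_\varphi^{i-1}$ and expanding $P(x)=\sum_{i=1}^k ix^{k-i}$ in non-negative combinations of powers of $x+\frac{2}{k-1}$ (Lemma A.1 of \cite{[L]}), rather than a Newton--Maclaurin induction on symmetric functions, but the content is the same.
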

\begin{proof}The argument is the same as in \cite{[L]} and here we give the
details for completeness.   By the definition of $\td E_k$, we have
\beqs &&\td E_k-(k+1)\td
E_0\\&=&\sum_{i=0}^{k-1}\;(-1)^{k-i}\binom{k+1}{i}\td G_{k-i}\\&=&
\frac 1V\int_M\;\sqrt{-1}\partial u\wedge\bar \partial u\wedge
\Big(\sum_{i=0}^{k-1}(-1)^{k-i-1}\binom{k+1}{i}(\pbp
u)^{k-i-1}\wedge \oo_{\varphi}^i\Big)\wedge
e^{\theta_X(\varphi)}\oo_{\varphi}^{n-k}+C_{\oo, X, k}
\\&=& \frac
1V\int_M\;\sqrt{-1}\partial u\wedge\bar \partial u\wedge
\Big(\sum_{i=0}^{k-1}\binom{k+1}{i}(Ric_{\varphi}-L_X\oo_{\varphi}-\oo_{\varphi})^{k-i-1}\wedge
\oo_{\varphi}^i\Big)\wedge
e^{\theta_X(\varphi)}\oo_{\varphi}^{n-k}+C_{\oo, X, k}. \eeqs
Observe that \beq
\sum_{i=0}^{k-1}\binom{k+1}{i}(Ric_{\varphi}-L_X\oo_{\varphi}-\oo_{\varphi})^{k-i-1}\wedge
\oo_{\varphi}^i=\sum_{i=1}^{k}i
(Ric_{\varphi}-L_X\oo_{\varphi})^{k-i}\wedge
\oo_{\varphi}^{i-1}.\label{x1}\eeq For $k\geq 2,$ let
$$P(x)=\sum_{i=1}^{k}ix^{k-i}=(x+\frac
2{k-1})^{k-1}+\sum_{i=2}^k\;a_i (x+\frac 2{k-1})^{k-i},$$ where
$a_i$ are the constants defined by
$$a_i=\frac 1{(k-i)!}P^{(k-i)}(-\frac 2{k-1}).$$
By Lemma {A.1} in the appendix of \cite{[L]}, $a_i\geq 0.$ Thus, for any $\varphi\in\cM_{X, k}^+(\oo) $ we have
\beqs &&\sum_{i=1}^{k}i (Ric_{\varphi}-L_X\oo_{\varphi})^{k-i}\wedge
\oo_{\varphi}^{i-1}\\&=&\Big(Ric_{\varphi}-L_X\oo_{\varphi}+\frac
2{k-1}\oo_{\varphi}\Big)^{k-1}+\sum_{i=2}^k\;a_i
\Big(Ric_{\varphi}-L_X\oo_{\varphi}+\frac 2{k-1}\oo_{\varphi}\Big)^{k-i}\wedge
\oo_{\varphi}^{i-1}\geq 0.
\eeqs
 Therefore, $\td E_k\geq (k+1)\td E_0+C_{\oo, X, k}$ and the lemma
 is proved.

\end{proof}

\section{Proof of Theorem \ref{theo1}}

Suppose that $M$ admits a K\"ahler-Ricci soliton in the class
$\cM_X(\oo)$. This implies that $\td F$ functional is proper on
$\cM_G(\oo_{KS})'$ by Theorem \ref{theo:CTZ}, and also $\td E_0$ is
 proper on $\cM_G(\oo_{KS})'$ by (\ref{ineq:E0 F}). Thus,  $\td
E_1$ is also proper on $\cM_G(\oo_{KS})'$
 by (\ref{eq:E0 E_1})  and so is $\td E_k$ on $\cM_{X, k}^+(\oo)\cap \cM_G(\oo_{KS})'$  for any $k\in \{2, \cdots,
n\}$ by Lemma \ref{lem:Ek}. Thus, part $(b)$ of Theorem \ref{theo1}
is proved.

To finish part $(a)$ of Theorem \ref{theo1}, it suffices  to prove:

\begin{lem}\label{lem:theo1}
If $\td E_k$ is proper on $ \cM_{X, k}^+(\oo)$ for any $k\in \{2, 3,
\cdots, n\}$, then there exists a K\"ahler-Ricci soliton on $M.$
\end{lem}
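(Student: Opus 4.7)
The strategy is the continuity method along the family (\ref{b1}), following the scheme of Cao--Tian--Zhu \cite{[CTZ]}. At $t=0$, the equation $(\oo+\pbp\varphi)^n=e^{h_\oo-\theta_X(\varphi)}\oo^n$ admits a smooth $X$-invariant solution by the argument of Zhu, while a solution at $t=1$ is precisely a K\"ahler--Ricci soliton with respect to $X$. The linearization of (\ref{b1}) at $\varphi_t$ is $\Delta_{\varphi_t}+X+t$, which is invertible on the $X$-invariant subspace for $t\in(0,1)$; this yields openness in the standard way. Hence the whole problem reduces to a uniform $C^0$ estimate on $\varphi_t$ along the open interval of solvability.

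Taking $\pbp\log$ of (\ref{b1}) gives
$$Ric_{\varphi_t}-L_X\oo_{\varphi_t}=(1-t)\oo+t\oo_{\varphi_t}\geq 0\geq -\frac{2}{k-1}\oo_{\varphi_t},$$
so $\varphi_t\in\cM_{X,k}^+(\oo)$ for every $t\in[0,1]$. By the assumed properness of $\td E_k$ on this space, it is therefore enough to bound $\td E_k(\varphi_t)$ \emph{from above}, uniformly in $t$: the proper function $\rho$ will then yield $I_\oo(\varphi_t)\leq C$, from which $\|\varphi_t\|_{C^0}\leq C'$ follows by the Harnack/Moser iteration argument in \cite{[CTZ]} (which goes through unchanged thanks to the $X$-invariance), and higher regularity follows from Yau's Monge--Amp\`ere estimates.

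A short computation using (\ref{u:defi}) and (\ref{b1}) gives the key identity $u(\varphi_t)=(1-t)\varphi_t$ along the path, hence $\pbp u=(1-t)(\oo_{\varphi_t}-\oo)$. Substituting into Definition \ref{defi:E}, each summand in $\td E_{k,\oo}(\varphi_t)$ takes the form
$$\td G_{j,\oo}(\varphi_t)=-(1-t)^{j+1}\cdot\frac{1}{V}\int_M\i\p\varphi_t\wedge\bar\p\varphi_t\wedge(\pbp\varphi_t)^{j-1}\wedge e^{\theta_X(\varphi_t)}\oo_{\varphi_t}^{n-j}\;+\;C_{0,j}.$$
Expanding $(\pbp\varphi_t)^{j-1}=(\oo_{\varphi_t}-\oo)^{j-1}$ in the mixed basis $\{\oo^a\wedge\oo_{\varphi_t}^{j-1-a}\}$ and integrating by parts against the weighted volume $e^{\theta_X(\varphi_t)}\oo_{\varphi_t}^n$, each term is controlled by $|\td G_j(\varphi_t)|\leq \varepsilon\,I_\oo(\varphi_t)+C(\varepsilon)$ for any $\varepsilon>0$. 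Combined with the upper bound $\td F(\varphi_t)\leq C$ along (\ref{b1}) known from \cite{[CTZ]} and the identity (\ref{eq:K F}) to control $\td E_0(\varphi_t)$ (whose correction terms are controllable once $u=(1-t)\varphi_t$ is substituted), this yields $\td E_k(\varphi_t)\leq C+\varepsilon I_\oo(\varphi_t)$; properness of $\td E_k$ then absorbs the $\varepsilon$-term and forces $I_\oo(\varphi_t)\leq C''$.

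The main obstacle is precisely the estimate on $\td G_j(\varphi_t)$: since $\pbp u=(1-t)\pbp\varphi_t$ is not of definite sign, each monomial of the binomial expansion must be treated separately, and the curvature positivity $Ric_{\varphi_t}-L_X\oo_{\varphi_t}\geq 0$ established above is exactly what is needed when integrating by parts to control the cross-terms in such a way that only $I_\oo(\varphi_t)$ (rather than a higher Sobolev norm of $\varphi_t$) appears on the right. Once the $C^0$ estimate is in hand, passing to a smooth subsequential limit as $t\to 1$ produces the K\"ahler--Ricci soliton in $2\pi c_1(M)$ claimed in the lemma.
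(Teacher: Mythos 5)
Your overall skeleton matches the paper's: the continuity path (\ref{eq soliton}), existence at $t=0$ by \cite{[Z]}, openness by the implicit function theorem, the observation that $Ric_{\varphi_t}-L_X\oo_{\varphi_t}=t\oo_{\varphi_t}+(1-t)\oo>0$ places $\varphi_t$ in $\cM_{X,k}^+(\oo)$, the key identity $u(\varphi_t)=(1-t)\varphi_t$, and the reduction to a uniform upper bound on $\td E_k(\varphi_t)$. However, your closing step contains a genuine gap. Properness, as defined in this paper, only gives $\td E_k(\varphi_t)\geq \rho(I_{\oo}(\varphi_t))$ for \emph{some} increasing $\rho$ with $\rho(s)\to+\infty$; it does not give a linear lower bound. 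Hence from an estimate of the form $\rho(I_{\oo}(\varphi_t))\leq C+\varepsilon\, I_{\oo}(\varphi_t)$ you cannot conclude that $I_{\oo}(\varphi_t)$ is bounded: if, say, $\rho(s)=\log(1+s)$, the inequality is satisfied by arbitrarily large $I$. The situation is not saved by letting $\varepsilon\to 0$ as $t\to 1$, since $\varepsilon_t I_t$ can still dominate $\rho(I_t)$ when $I_t$ blows up fast enough. (A secondary point: your claim that $|\td G_j(\varphi_t)|\leq\varepsilon I_{\oo}(\varphi_t)+C(\varepsilon)$ holds \emph{for any} $\varepsilon>0$ is too strong; what the substitution $\pbp u=(1-t)\pbp\varphi_t$ actually yields is a coefficient $(1-t)^{j+1}$ in front of $I_{\oo}(\varphi_t)$, i.e.\ $\varepsilon$ is tied to $t$, not a free parameter. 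Also, the integration-by-parts bound by $I_{\oo}(\varphi_t)$ does not need the curvature positivity you invoke; that positivity is used in Lemma \ref{lem:Ek}, not here.)

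The paper closes this gap by extracting a \emph{negative} term of lower order in $(1-\tau)$ from the $\td E_0$ part. Differentiating $\td E_0$ along the path and integrating by parts against the equation gives exactly
\begin{equation*}
\td E_0(\varphi_{\tau})-\td E_0(\varphi_0)=-(1-\tau)(\td I-\td J)(\varphi_{\tau})+(\td I-\td J)(\varphi_0)-\int_0^{\tau}(\td I-\td J)\,dt
\leq -c(n)(1-\tau)I_{\oo}(\varphi_{\tau})+C,
\end{equation*}
using (\ref{I inequality}) and the monotonicity of $\td I-\td J$. The $\td G$ terms contribute at most $c(n)(1-\tau)^2 I_{\oo}(\varphi_{\tau})+C_{\oo,X,k}$ (this is your computation, done via the identity (\ref{x1})). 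For $\tau$ close to $1$ one has $c(n)(1-\tau)^2\leq c(n)(k+1)(1-\tau)$, so the two $I$-terms combine with a nonpositive sign and one obtains the clean bound $\td E_k(\varphi_{\tau})\leq C(\oo,\varphi_0)$ with no residual $I$ on the right. Only then does $\rho(I_{\oo}(\varphi_{\tau}))\leq C$ force $I_{\oo}(\varphi_{\tau})\leq \rho^{-1}(C)$, after which the $C^0$ bound via the Harnack inequality of \cite{[CTZ]} and the higher-order estimates proceed as you describe. You should redo your estimate of $\td E_0(\varphi_t)$ so as to retain this explicit $-(1-\tau)(\td I-\td J)(\varphi_{\tau})$ term rather than discarding it into a constant via (\ref{eq:K F}).
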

\begin{proof}We consider the complex Monge-Ampere equations with parameter $t\in
[0, 1]$ \beq (\oo+\pbp \varphi)^n=
e^{h-\theta_X(\varphi)-t\varphi}\oo^n. \label{eq soliton} \eeq There
exists a unique solution at $t=0$ modulo constants by \cite{[Z]},
and the set of $t\in [0, 1]$ such that (\ref{eq soliton}) has a
solution is open by the implicit function theorem(cf. \cite{[TZ2]}).
 Therefore, to prove that there is a
solution for $t=1,$ it suffices to prove that $ I_{\oo}(\varphi)$ is
uniformly bounded for $0\leq t< 1.$

Note that the solution
$\varphi_t\in \cM_{X, k}^+(\oo)$, since the equation (\ref{eq soliton}) can be written as
$$Ric_{\varphi}-L_X\varphi=t\oo_{\varphi}+(1-t)\oo>0.$$
Since $\td E_k$ is proper on $ \cM_{X, k}^+(\oo)$, there exists an increasing function $\rho: \RR\ri \RR$
satisfying $\lim_{s\ri +\infty}\rho(s)=+\infty$ such that
$\td E_k(\varphi(t))\geq \rho(I_{\oo}(\varphi(t))).$ Now we show that $\td E_k$ is uniformly bounded
from above for $t\in [0, 1).$ In fact, \beqs \pd {}t\td
E_0(\varphi_t)&=&\frac nV\int_M\;\i\p \pd {\varphi}t\wedge \bar \p
(h_{\varphi}-\theta_X(\varphi))\wedge e^{\theta_X(\varphi)}
\oo_{\varphi}^{n-1}\\
&=&-\frac nV\int_M\;\i\p \pd {\varphi}t\wedge \bar \p u\wedge
e^{\theta_X(\varphi)} \oo_{\varphi}^{n-1}. \eeqs Thus, for the
solution $\varphi_t(0\leq t\leq \tau\leq 1)$ we have
\beqn
\td E_0(\varphi_{\tau})-\td E_0({\varphi_0})&=&-\frac nV\int_0^{\tau}\int_M\;(1-t)\i
\p \pd {\varphi}t\wedge \bar \p \varphi\wedge
e^{\theta_X(\varphi)} \oo_{\varphi}^{n-1}\wedge dt\n\\
&=&\frac 1V\int_0^{\tau}\int_M\;(1-t)\varphi(\Delta_{\varphi}+X)\pd {\varphi}t
e^{\theta_X(\varphi)} \oo_{\varphi}^{n}\wedge dt\n\\
&=&-\int_0^{\tau}\;(1-t)\frac d{dt}(\td I-\td J)dt\n\\
&=&-(1-\tau)(\td I-\td J)(\varphi_{\tau})+(\td I-\td J)(\varphi_0)-\int_0^{\tau}(\td I-\td J)dt \label{eq:c1}\\
&\leq&-c(n)(1-\tau)I_{\oo}(\varphi_{\tau})+(\td I-\td J)(\varphi_0)-\int_0^{\tau}(\td I-\td J)dt,\n
\eeqn
where we have used the inequality (\ref{I inequality}).
Hence, by the definition of $\td E_k$ we have
\beqn
\td E_k(\varphi_{\tau})&=&(k+1)E_0(\varphi_{\tau})+\sum_{i=0}^{k-1}\;(-1)^{k-i}\binom{k+1}{i}\tilde
G_{k-i, \oo}(\varphi_{\tau})\n\\
&\leq&(k+1)\td E_0(\varphi_0) -c(n)(k+1)(1-\tau)I_{\oo}(\varphi_{\tau})+(k+1)(\td I-\td J)(\varphi_0)\n\\
&&-(k+1)\int_0^{\tau}(\td I-\td J)dt+\sum_{i=0}^{k-1}\;(-1)^{k-i}\binom{k+1}{i}\tilde
G_{k-i, \oo}(\varphi_{\tau}). \label{a1}
\eeqn
Note that $\varphi_{\tau}$ satisfies the equation (\ref{eq soliton}) and we have
\beqs u(\tau)&=&\log\frac {\oo_{\varphi_{\tau}}^n}{\oo^n}+\varphi_{\tau}-h_{\oo}+\theta_X(\varphi_{\tau})\\
&=&(1-\tau)\varphi_{\tau}.\eeqs
Thus, we have
\beqn
&&\sum_{i=0}^{k-1}\;(-1)^{k-i}\binom{k+1}{i}\tilde
G_{k-i, \oo}(\varphi_{\tau})\n\\&=&\frac 1V\int_M\;\i \p u\wedge\bar \p u\wedge \Big(\sum_{i=0}^{k-1}(-1)^{k-i-1}
\binom{k+1}{i} (\pbp u)^{k-i-1}\wedge \oo_{\varphi_{\tau}}^i\Big)\wedge e^{\theta_X(\varphi_{\tau})}\oo_{\varphi_{\tau}}^{n-k}+C_{\oo, X, k}\n\\
&=&\frac 1V\int_M\;(1-\tau)^2\i \p \varphi_{\tau}\wedge\bar \p \varphi_{\tau}\wedge \n\\
&&\Big(\sum_{i=0}^{k-1}(-1)^{k-i-1}
\binom{k+1}{i} ((1-\tau)\pbp \varphi_{\tau})^{k-i-1}\wedge \oo_{\varphi_{\tau}}^i\Big)\wedge e^{\theta_X(\varphi_{\tau})}\oo_{\varphi_{\tau}}^{n-k}+C_{\oo, X, k}\n\\
&=&\frac 1V\int_M\;(1-\tau)^2\i \p \varphi_{\tau}\wedge\bar \p \varphi_{\tau}\wedge \Big(\sum_{i=1}^k i
(\tau \oo_{\varphi_{\tau}}+(1-\tau)\oo)^{k-i}\wedge \oo_{\varphi_{\tau}}^{i-1}\Big)\wedge e^{\theta_X(\varphi_{\tau})}\oo_{\varphi_{\tau}}^{n-k}+C_{\oo, X, k}\n\\
&\leq&(1-\tau)^2c(n)I(\varphi_{\tau})+C_{\oo, X, k}.\label{a2} \eeqn
Combining this with inequality (\ref{a1}),  for any $\tau$
sufficiently close to $1$ we have
$$\rho(I(\varphi_{\tau}))\leq \td E_k(\varphi_{\tau})\leq C(\oo, \varphi_0).$$
Hence $I(\varphi_t)$ is uniformly bounded from above for $t\in [0, 1)$. Thus, $|\varphi_t|_{C^0}$ and all higher order estimates are uniformly
bounded for any $t\in [0, 1)$ and the solution $\varphi_t(t\in [0, 1))$ can be extended to $t=1$ smoothly. This concludes that
$M$ admits a K\"ahler-Ricci soliton.

\end{proof}

\section{Proof of Theorem \ref{theo2}}
Suppose $\td F$ is bounded from below on $\cM_X(\oo)$. Then by
(\ref{ineq:E0 F}) $\td E_0$ is bounded from below and so is $\td
E_k$ on $\cM_k^+(\oo)$ by Lemma \ref{lem:Ek} for any $k\in \{1, 2,
\cdots, n\}.$ Thus, it suffices to prove the following

\begin{lem}If $\td E_k$ is bounded from below on $\cM_k^+(\oo)$ for any $k\in \{1, 2,
\cdots, n\},$ then $\td F$ is bounded from below on $\cM(\oo).$
\end{lem}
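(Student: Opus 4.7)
Following Ding--Tian \cite{[DT]}, the approach is the continuity method. Consider the family of complex Monge--Amp\`ere equations
\[
(\oo + \pbp\varphi)^n = e^{h_\oo - \theta_X(\varphi) - t\varphi}\oo^n, \qquad t \in [0, 1),
\]
whose solutions $\varphi_t$ exist for all $t \in [0, 1)$ by \cite{[Z]} and the implicit function theorem. As in the proof of Lemma \ref{lem:theo1}, the equation forces $Ric_{\varphi_t} - L_X\oo_{\varphi_t} = t\oo_{\varphi_t} + (1-t)\oo > 0$, so $\varphi_t \in \cM_{X,k}^+(\oo)$ for every $k$, and the hypothesis gives $\td E_k(\varphi_t) \geq -M$ uniformly in $t$.

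The first step is to extract from this a uniform lower bound $\td F(\varphi_t) \geq -M'$ along the path. The identity $u(\varphi_t) = (1-t)\varphi_t$ makes every $\td G_{k-i}(\varphi_t)$ quadratic in $(1-t)\varphi_t$, so the computation (\ref{a2}) gives
\[
\td E_k(\varphi_t) - (k+1)\td E_0(\varphi_t) \leq (1-t)^2 c(n) I_\oo(\varphi_t) + C_{\oo, X, k}.
\]
Writing $\td E_0(\varphi_t) - \td F(\varphi_t)$ via (\ref{eq:K F}) and evaluating the log term $\log V^{-1}\!\int e^{h_\oo - \varphi_t}\oo^n = \log V^{-1}\!\int e^{-(1-t)\varphi_t + \theta_X(\varphi_t)}\oo_{\varphi_t}^n$ by Jensen's inequality applied to the probability measure $V^{-1}e^{\theta_X(\varphi_t)}\oo_{\varphi_t}^n$, we compare $\td E_k(\varphi_t)$ with $(k+1)\td F(\varphi_t)$ up to an $o(1)$ error as $t \to 1$. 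Together with $\td E_k(\varphi_t) \geq -M$ and a limiting argument (using the convergence of $(1-t)\varphi_t$ and the Ding--Tian monotonicity of $\td F$ along the path), this gives the desired bound $\td F(\varphi_t) \geq -M'$.

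The second step is to transfer this to an arbitrary $\psi \in \cM_X(\oo)$. Ding--Tian's continuity method produces, for each such $\psi$, an inequality of the form $\td F_\oo(\psi) \geq \td F_\oo(\varphi_{t(\psi)}) - C$ for some $t(\psi) \in [0, 1)$. The standard proof uses a second continuity family such as
\[
(\oo + \pbp \phi_s)^n = e^{h_\oo - \theta_X(\phi_s) - s\phi_s - (1-s)\psi}\oo^n, \qquad s \in [0, 1],
\]
interpolating between $\psi$ and the canonical continuity path, and exploits the monotonicity of $\td F$ along $\phi_s$ together with the cocycle identity for $\td F$. Combining with Step~1 yields $\td F(\psi) \geq -M' - C$ on all of $\cM_X(\oo)$.

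\textbf{Main obstacle.} The chief difficulty is Step~2, namely the transfer of the lower bound from the canonical continuity path to an arbitrary potential in $\cM_X(\oo)$. The inequalities of Lemma \ref{lem:Ek} and (\ref{ineq:E0 F}) go in the \emph{wrong} direction to compare $\inf\td F$ with $\inf\td E_k$, so routing through $\varphi_t$ (where $u = (1-t)\varphi_t$ makes the estimates tight) via a second continuity family is essential. Controlling the log--integral term in $\td F$ and the error $(1-t)^2 I_\oo(\varphi_t)$ in the limit $t \to 1$ is the technical heart of the argument.
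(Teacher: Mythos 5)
Your Step 1 is essentially the paper's Claim 4.2 at $s=0$ and is sound in outline, though the finiteness of $\lim_{t\to 1^-}\td F_{\oo}(\varphi_t)$ comes most directly from $\int_0^1(\td I-\td J)\,d\tau<\infty$ combined with the Cao--Tian--Zhu estimate $\|\varphi_t\|_{C^0}\le c_1 I_{\oo}(\varphi_t)+c_2$, which forces $(1-t)\|\varphi_t\|_{C^0}\to 0$ and hence kills the logarithmic term in $\td F$; you should invoke that estimate explicitly rather than an unexplained ``convergence of $(1-t)\varphi_t$.'' The genuine gap is Step 2, which you rightly identify as the heart of the matter but for which the proposed mechanism does not work. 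First, the family $(\oo+\pbp\phi_s)^n=e^{h_{\oo}-\theta_X(\phi_s)-s\phi_s-(1-s)\psi}\oo^n$ does not actually contain $\psi$: at $s=0$ its solution $\phi_0$ solves a Calabi--Zhu type equation and equals $\psi$ only if $\psi$ is already a soliton potential, so even granting everything else no inequality involving $\td F_{\oo}(\psi)$ is produced. Second, at $s=1$ this family terminates at the soliton equation itself, whose solvability is precisely what is unavailable under a mere lower bound on $\td E_k$, so the path cannot be run to its endpoint; and the ``monotonicity of $\td F$ along $\phi_s$'' you invoke is neither standard nor proved (the known fact is the monotonicity of $\td I-\td J$ in $t$ along the canonical path, giving $\td J_{\oo}(\varphi_t)-\frac 1V\int_M\varphi_t e^{\theta_X}\oo^n=-\frac 1t\int_0^t(\td I-\td J)\,d\tau$). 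Consequently the asserted inequality $\td F_{\oo}(\psi)\geq \td F_{\oo}(\varphi_{t(\psi)})-C$ is never derived.

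The missing idea, which is how the paper closes the argument, is to deform the \emph{background metric} rather than the source term: set $\oo_s=\oo+s\pbp\psi$ and for each $s\in[0,1]$ run the full continuity path $(\oo_s+\pbp\varphi)^n=e^{h_s-\theta_s(\varphi)-t\varphi}\oo_s^n$ in $t\in[0,1)$. One then shows (a) for each $s$ the limit $\lim_{t\to 1^-}\td F_{\oo_s}(\varphi_{s,t})$ is finite and $\le 0$ (your Step 1, carried out uniformly in $s$), and (b) after the substitution $\hat\varphi_{s,t}=s\psi+\varphi_{s,t}-c_s$ the $s$-derivative of $\td F_{\oo}(\hat\varphi_{s,t})$ is $O(1-t)$, so the limit in (a) is \emph{independent of} $s$. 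The cocycle identity at $s=1$, where $\oo_1=\oo_\psi$, then gives $\td F_{\oo}(\psi)\ge\lim_{t\to 1^-}\td F_{\oo}(\hat\varphi_{1,t})=\lim_{t\to 1^-}\td F_{\oo}(\varphi_{0,t})$, a constant independent of $\psi$. Without (b), the $s$-independence of the limit, the transfer from the canonical path to an arbitrary $\psi$ does not go through, and this is the step your proposal is missing.
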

\begin{proof}For any $\psi\in \cM_X(\oo),$ we set $\oo_s=\oo+s\pbp
\psi,$ and let $\varphi_{s, t}$ be the solution of the equation
\beq
(\oo_s+\pbp \varphi)^n=e^{h_s-\theta_{s}(\varphi)-t\varphi}\oo_s^n,\label{theo2:eq1}
\eeq
where $h_s$  satisfies
\beq Ric(\oo_s)-\oo_s=\pbp h_s, \quad \int_M\; e^{h_s} \oo_s^n=V,\label{lem:theo2:eq3}\eeq
and $\theta_s$ is defined by
$$\theta_s=\theta_X+X(s\psi),\quad \int_M\; e^{\theta_s}\oo_s^n=V. $$
Since $\td E_k$ is bounded from below, from the proof of Lemma \ref{lem:theo1} the solution $\varphi_{s, t}$ of (\ref{theo2:eq1}) exists for any $t\in [0, 1)$ and each $s\in [0, 1].$
Now we have the following

\begin{claim}\label{lem:theo2}For any $s\in [0, 1]$ we have
\beqn
-\infty<\lim_{t\ri 1^-}\td F_{\oo_s}(\varphi_{s, t})\leq 0. \label{lem:theo2:eq1}
\eeqn

\end{claim}
\begin{proof}By the proof of Lemma \ref{lem:theo1}, for any $t\in [0, 1)$ we have
\beqn
\td E_{k, \oo_s}(\varphi_{s, t}) \leq -c(n, k)(1-t)I_{\oo}(\varphi_{s, t})+C(\oo_s)-(k+1)\int_0^{t}(\td I_{\oo_s}(\varphi_{s, \tau})-\td J_{\oo_s}(\varphi_{s, \tau}))d\tau.\n
\eeqn
By the assumption that $\td E_k$ is bounded from below, we have
\beq
c(n, k)(1-t)I_{\oo}(\varphi_{s, t})+(k+1)\int_0^{1}(\td I_{\oo_s}(\varphi_{s, \tau})-\td J_{\oo_s}(\varphi_{s, \tau}))d\tau\leq C(\oo_s).\label{lem:theo2:eq2}
\eeq
Note that $\td I_{\oo_s}(\varphi_{s, \tau})-\td J_{\oo_s}(\varphi_{s, \tau})$ is increasing
with respect to $\tau,$ we have
\beq
0\leq \td I_{\oo_s}(\varphi_{s, t})-\td J_{\oo_s}(\varphi_{s, t})\leq
\frac 1{1-t}\int_t^1(\td I_{\oo_s}(\varphi_{s, \tau})-\td J_{\oo_s}(\varphi_{s, \tau}))d\tau.\n
\eeq
and
\beq \lim_{t\ri 1^-}(1-t)(\td I_{\oo_s}(\varphi_{s, t})-\td J_{\oo_s}(\varphi_{s, t}))=0. \label{theo2:eq2}\eeq

By Proposition 3.1 in \cite{[CTZ]}, there exists two constants $c_1=c_1(X, \oo)$ and $c_2=c_2(X, \oo)$ such that for any $t\in [\frac 12, 1)$
\beq
\|\varphi_{s, t}\|_{C^0}\leq c_1 I_{\oo_s}(\varphi_{s, t})+c_2.\n
\eeq
Combining this with (\ref{theo2:eq2})(\ref{I inequality}), for any $s\in [0, 1]$ we have
\beq
\lim_{t\ri 1^-}(1-t)\|\varphi_{s, t}\|_{C^0}= 0. \label{theo2:eq4}
\eeq
Note that
$$\frac d{dt}\int_M\; e^{\theta_s(\varphi_{s, t})}\oo_{\varphi_{s, t}}^n=\int_M\;
 (\Delta_{s, t}+X)\pd {\varphi_{s, t}}te^{\theta_s(\varphi_{s, t})}\oo_{\varphi_{s, t}}^n=0,$$
we infer that
$$\int_M\; e^{h_{s}-t\varphi_{s, t}}\oo^n=\int_M\; e^{\theta_s(\varphi_{s, t})}
\oo_{\varphi_{s, t}}^n=\int_M\; e^{\theta_s}\oo_s^n=V.$$
Combining this with (\ref{theo2:eq4}), for any $s\in [0, 1]$ we have
\beq
\lim_{t\ri 1^-}\int_M\; e^{h_{s}-\varphi_{s, t}}\oo_s^n=\lim_{t\ri 1^-}\int_M\; e^{h_{s}-t\varphi_{s, t}}\cdot e^{(t-1)\varphi_{s, t}}\oo_s^n=V,\label{lem:theo2:eq5}
\eeq
and we can infer that
\beqn
\lim_{t\ri 1^-}\td F_{\oo_s}(\varphi_{s, t})&=&\lim_{t\ri 1^-}\Big(
\td J_{\oo_s}( \varphi_{s, t})-\frac 1V\int_M\;  \varphi_{s, t}e^{\theta_s}\oo_s^n\Big)\n\\
&=&-\int_0^{1}(\td I_{\oo_s}(\varphi_{s, \tau})-\td J_{\oo_s}(\varphi_{s, \tau}))d\tau\leq 0,\label{eq:f1}
\eeqn where we used (\ref{lem:theo2:eq2}) and the fact that (cf. Proposition 1.1 in \cite{[CTZ]})
$$\td J_{\oo_s}( \varphi_{s, t})-\frac 1V\int_M\;  \varphi_{s, t}e^{\theta_s}\oo_s^n=-\frac 1{t}\int_0^t\;
(\td I_{\oo_s}(\varphi_{s, \tau})-\td J_{\oo_s}(\varphi_{s, \tau}))d\tau.$$
Thus, the claim is proved.
\end{proof}
\medskip

\begin{claim}\label{lem2:theo2}For any $s\in [0, 1]$, we have
$$\lim_{t\ri 1^-}\td F_{\oo_s}(\varphi_{s, t})=\lim_{t\ri 1^-}\td F_{\oo_s}(\varphi_{0, t}).$$
In other words, the limit $\lim_{t\ri 1^-}\td F_{\oo_s}(\varphi_{s, t})$ is independent of $s$.

\end{claim}
\begin{proof}By (\ref{lem:theo2:eq3}) we have
\beq
h_s=-\log\frac {\oo_s^n}{\oo^n}-s\psi+h_{\oo}+c_s\label{lem2:theo2:eq1}
\eeq
where $c_s$ is a constant given by
\beq
\int_M\; e^{h_{\oo}-s\psi+c_s}\oo^n=V.\n
\eeq
Thus, (\ref{theo2:eq1}) can be written as
\beq
(\oo+\pbp (s\psi+\varphi_{s, t}))^n=e^{h_{\oo}-\theta_{X}-
X(s\psi+\varphi_{s, t})-t\varphi_{s, t}-s\psi+c_s}\oo^n.\n
\eeq
Let $\hat \varphi_{s, t}=s\psi+\varphi_{s, t}-c_s$, we have
\beq
(\oo+\pbp \hat \varphi_{s, t})^n=e^{h_{\oo}-\theta_X(\hat \varphi_{s, t})-t\hat \varphi_{s, t}-(1-t)(s\psi-c_s)}\oo^n.
\eeq
Taking derivative with respect to $s$, we have
\beq
(\Delta_{s, t}+X)\pd {\hat \varphi_{s, t}}s=-t\pd {\hat \varphi_{s, t}}s-(1-t)(\psi-\frac {dc_s}{ds}).\label{lem:theo2:eq4}
\eeq
Direct calculation shows that
\beqn
\pd {}s\Big(\td J_{\oo}(\hat \varphi_{s, t})-\frac 1V\int_M\; \hat \varphi_{s, t}e^{\theta_X}\oo^n\Big)&=&
-\frac 1V\int_M\; \pd {\hat \varphi_{s, t}}s e^{\theta_X(\hat \varphi_{s, t})}\oo_{s, t}^n\n\\
&=&-\frac {1-t}{tV}\int_M\; (\psi-\frac {dc_s}{ds})\oo^n_{s, t}, \label{lem2:theo2:eq3}
\eeqn where we used (\ref{lem:theo2:eq4}).
Note that for any $s\in [0, 1]$ we have
\beq \lim_{t\ri 1^-}\int_M\; e^{h_{\oo}-\hat \varphi_{s, t}}\oo^n=\lim_{t\ri 1^-}\int_M\; e^{h_s-\varphi_{s, t}}\oo_s^n=V,\label{lem2:theo2:eq2}\eeq
where we used (\ref{lem:theo2:eq5}) and (\ref{lem2:theo2:eq1}). Combining (\ref{lem2:theo2:eq3}) with (\ref{lem2:theo2:eq2}), for any $s\in [0, 1]$ we have
\beqn
\lim_{t\ri 1^-}\Big|\td F_{\oo}(\hat \varphi_{s, t})-\td F_{\oo}(\hat \varphi_{0, t})\Big|&=&
\lim_{t\ri 1^-}\Big|\int_0^s\pd {}{\tau}\Big(\td J_{\oo}(\hat \varphi_{\tau, t})-\frac 1V\int_M\; \hat \varphi_{\tau, t}e^{\theta_X}\oo^n\Big)d\tau\Big|\n
\\&=&\lim_{t\ri 1^-}\Big|\frac {1-t}{tV}\int_0^s\,d\tau\int_M\; (\psi-\frac {dc_{\tau}}{ds})\oo^n_{\tau, t}\Big|\n\\
&=&0.\n
\eeqn
The claim is proved.

\end{proof}
By Claim \ref{lem:theo2} we have
\beq
\lim_{t\ri 1^-}(\td F_{\oo}(\oo_{s, t})-\td F_{\oo}(\oo_s))=\lim_{t\ri 1^-}\td F_{\oo_s}(\oo_{s, t})\leq 0.\n
\eeq
and by Claim \ref{lem2:theo2}
\beq
\td F_{\oo}(\psi)=\td F_{\oo}(\oo_1)\geq\lim_{t\ri 1^-}\td F_{\oo}(\oo_{1, t})=\lim_{t\ri 1^-}\td F_{\oo}(\oo_{0, t}).\n
\eeq
Thus, $\td F$ is uniformly bounded from below on $\cM_X(\oo).$

\bigskip

Now we prove the equality (\ref{eq:e1}). Suppose that one of the energy functionals $\td E_k$  and $\td F$
is bounded from below on $\cM_{X, k}(\oo)$,  by Lemma \ref{lem:Ek} and the inequality (\ref{ineq:E0 F})
we have \beqn
\inf_{\oo'\in\cM_{X, k}(\oo)}\td E_{k, \oo}(\oo')&\geq&
(k+1)\inf_{\oo'\in \cM_X(\oo)}\td E_{0, \oo}(\oo')+C_{\oo, X, k}\n\\
&\geq&(k+1)\inf_{\oo'\in \cM_X(\oo)}\td F_{\oo}(\oo')+C_{\oo, X,
k}-\frac {k+1}V\int_M\; u_0e^{\theta_X}\oo^n. \label{eq:d1} \eeqn On
the other hand, for the solution $\varphi_t(t\in [0, 1))$ of
(\ref{eq soliton}) the inequality (\ref{a2}) implies that
$$\lim_{t\ri 1^-}\sum_{i=0}^{k-1}\;(-1)^{k-i}\binom{k+1}{i}\tilde
G_{k-i, \oo}(\varphi_t)\leq C_{\oo, X, k}.$$ Combining this with the
definition of $\td E_k$ and the equality (\ref{eq:c1}) (\ref{eq:f1})
we have \beqn \inf_{\oo'\in\cM_{X, k}(\oo)}\td E_{k,
\oo}(\oo')&\leq& \lim_{t\ri 1^-}\td E_{k, \oo}(\varphi_t)\leq
(k+1)\lim_{t\ri 1^-}\td E_{0, \oo}(\varphi_t)+C_{\oo, X, k}\n
\\&=&(k+1)\Big(\td E_{0, \oo}(\varphi_0)+(\td I-\td J)_{\oo}(\varphi_0)-
\int_0^1(\td I-\td J)_{\oo}(\varphi_{\tau})d\tau\Big)+C_{\oo, X, k}\n\\
&=&(k+1)\lim_{t\ri 1^-}\td F_{\oo}(\varphi_t)+C_{\oo, X, k}-\frac {k+1}V\int_M\; u_0e^{\theta_X}\oo^n\n\\
&=&(k+1)\inf_{\oo'\in \cM_X(\oo)}\td F_{\oo}(\oo')+C_{\oo, X,
k}-\frac {k+1}V\int_M\; u_0e^{\theta_X}\oo^n, \label{eq:d2} \eeqn
where $\varphi_t(t\in [0, 1))$ is the solution of (\ref{eq soliton})
and we have used
$$\td E_{0, \oo}(\varphi_0)+(\td I-\td J)_{\oo}(\varphi_0)=-\frac 1V\int_M\; u_0e^{\theta_X}\oo^n$$
since $\varphi_0$ is a solution of (\ref{eq soliton}) when $t=0.$
Combining (\ref{eq:d1})(\ref{eq:d2}), we have \beq
\inf_{\oo'\in\cM_{X, k}(\oo)}\td E_{k, \oo}(\oo')=
(k+1)\inf_{\oo'\in \cM_X(\oo)}\td F_{\oo}(\oo')+C_{\oo, X, k}-\frac
{k+1}V\int_M\; u_0e^{\theta_X}\oo^n.\n\eeq The theorem is proved.
\end{proof}

Following the ideas of the previous proof, we can finish Theorem
\ref{theo3}:

\begin{proof}[Proof of Theorem \ref{theo3}] For any $\psi\in
\cM_X(\oo),$ we consider the solution $\varphi_{s, t}$ of the
equation (\ref{theo2:eq1}). Suppose that  $\td F$ is bounded
from below for the solution $\varphi_{0, t}$, then $\varphi_{0, t}$
exists for all $t\in [0, 1)$ and $$\lim_{t\ri 1^-}\td
F_{\oo}(\varphi_{0, t})=-\int_0^1\;(\td I_{\oo}-\td
J_{\oo})(\varphi_{0, \tau})d\tau>-\infty.$$ For simplicity, we set
$$\td F^0_{\oo}(\varphi)=\td J_{\oo}(\varphi)-\frac 1V\int_M\; \varphi \,e^{\theta_X}\oo^n.$$
Then by (\ref{lem2:theo2:eq3}) for any $s\in [0, 1]$ we have \beqn
\td F_{\oo}^0(\hat \varphi_{s, t})-\td F_{\oo}^0(\hat \varphi_{0, t})=-\int_0^s\;\frac {1-t}{t V}\int_M\;
\Big(\psi-\frac {dc_{\tau}}{d\tau}\Big)\oo_{\tau, t}^nd\tau, \label{theo3:eq2}
 \eeqn
which implies that
$$\td F^0_{\oo_s}(\varphi_{s, t})=\td F^0_{\oo}(\hat \varphi_{s, t})-\td F^0_{\oo}(s\psi)$$
is uniformly bounded from below for any $t\in [\frac 12, 1)$ and $s\in [0, 1]$. Thus, the solution $\varphi_{s, t}$ exists for all
$t\in [0, 1)$ when $s\in [0, 1]$ and
$$\lim_{t\ri 1^-}\td F^0_{\oo_s}(\varphi_{s, t})=\lim_{t\ri 1^-}\td F_{\oo}^0(\varphi_{0, t})-\td F^0_{\oo}(s\psi)>-\infty. $$
On the other hand, we have
$$(1-t)(\td I_{\oo_s}(\varphi_{s, t})-\td J_{\oo_s}(\varphi_{s, t}))\leq \int_t^1\;
(\td I_{\oo_s}(\varphi_{s, t})-\td J_{\oo_s}(\varphi_{s, t}))dt\leq -\lim_{t\ri 1^-}\td F^0_{\oo_s}(\varphi_{s, t})$$
and thus we have
$$\lim_{t\ri 1^-}(1-t)|\varphi_{s, t}|_{C^0}=0.$$
We can argue as (\ref{lem:theo2:eq5}) to derive
\beq
\lim_{t\ri 1^-}\int_M\; e^{h_s-\varphi_{s, t}}\oo_s^n=V.\label{theo3:eq1}
\eeq
Thus, by the definition of $\td F$ and (\ref{theo3:eq1}) we have
\beq
-\infty<\lim_{t\ri 1^-}\td F_{\oo_s}(\varphi_{s, t})=\lim_{t\ri 1^-}\td F^0_{\oo_s}(\varphi_{s, t})\leq 0,\n
\eeq
and
\beq
\td F_{\oo}(\psi)=\td F_{\oo}(\oo_1)\geq\lim_{t\ri 1^-}\td F_{\oo}(\oo_{1, t})=\lim_{t\ri 1^-}\td F_{\oo}(\oo_{0, t}),\n
\eeq where we used (\ref{theo3:eq2}) in the last equality. This shows that $\td F$ is bounded from below in the K\"ahler class $2\pi c_1(M).$

Suppose that $\td E_k$ is bounded from below for $\varphi_t$, we can see from the proof of Lemma \ref{lem:theo1} that $\td F$ is also bounded
from below along $\varphi_t$. Thus, $\td F$ is bounded from below in the K\"ahler class $2\pi c_1(M)$ and
by Theorem \ref{theo2}  $\td E_k$ is bounded from below on $\cM_k^+(\oo)$.
The theorem is proved.

\end{proof}

\section{The holomorphic invariant}
Recall that Tian-Zhu defined the holomorphic invariant by
\beq
\cF_X(Y)=\int_M\; Y(h_g-\theta_X(g)) e^{\theta_X(g)}\,\oo_g^n,\quad Y\in \eta(M),\n
\eeq
which generalized the Futaki invariant. Here $\eta(M)$ denotes the space of holomorphic
vector fields on $M.$ Let $\{\Phi(t)\}_{|t|<\infty}$ be the one-parameter
subgroup of automorphisms induced by $Re(Y),$ and $\varphi(x, t)$ be the K\"ahler potential satisfying
\beq
\oo_{\varphi}=\Phi^*_t\oo=\oo+\pbp\varphi. \label{eq:a1}
\eeq
Differentiating (\ref{eq:a1}), we have
\beq
L_{Re(Y)}\oo_{\varphi}=\pbp \pd {\varphi}t.
\eeq
On the other hand, we have $L_Y \oo_{\varphi}=\pbp \theta_Y(\varphi)$. Thus,
\beq
\pd {\varphi}t=Re(\theta_Y(\varphi))+c
\eeq for some constant $c.$ Recall that $u$ satisfies
$$\pbp u=-Ric(\oo_{\varphi})+\oo_{\varphi}+\pbp \theta_X(\varphi).$$
Taking the interior product on both sides, we have
$$Y(u)=\Delta \theta_Y(\varphi)+\theta_Y(\varphi)+Y\theta_X(\varphi).$$
By the definition of $u,$ we have
\beq \pd ut=\Delta\pd {\varphi}t+\pd {\varphi}t+X(\pd {\varphi}t)+c=Re(Yu)+c,\label{eq:a2}\eeq
where we used the fact that $Y\theta_X(\varphi)=X\theta_Y(\varphi).$ Following a direct calculation, we have
the lemma:

\begin{lem}\label{lem1}
Let $\{\Phi(t)\}_{|t|<\infty}$ be the one-parameter
subgroup of automorphisms induced by $Re(Y),$ we have
\beq
\frac d{dt}\td E_0(\varphi)=\frac nV\cF_X(Y),\label{eq:a3}
\eeq
where $\varphi$ is given by $\Phi^*_t\oo=\oo+\pbp\varphi.$
\end{lem}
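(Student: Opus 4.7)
My plan is a direct computation, using the identities $\dot\varphi = \mathrm{Re}(\theta_Y(\varphi)) + c$ and $\dot u = \mathrm{Re}(Yu) + c$ established in (4.3) and (4.5), together with the identity $Y(u) = \Delta_\varphi\theta_Y(\varphi) + \theta_Y(\varphi) + Y\theta_X(\varphi)$ derived by applying $i_Y$ to the defining equation $\pbp u = -\mathrm{Ric}(\oo_\varphi) + \oo_\varphi + \pbp\theta_X(\varphi)$. The goal is to reduce $\frac{d}{dt}\td E_0(\varphi)$ to a scalar integral and recognize it as $\frac{n}{V}\cF_X(Y)$.

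First, directly differentiating (2.3) along the path yields
$$\frac{d}{dt}\td E_0(\varphi) = -\frac{n}{V}\int_M \i\p\dot\varphi\wedge\bar\p u\wedge e^{\theta_X(\varphi)}\oo_\varphi^{n-1},$$
where $u = -h_\varphi + \theta_X(\varphi)$ and I have used $h_\varphi - \theta_X(\varphi) = -u$.

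Second, I would substitute $\dot\varphi = \mathrm{Re}(\theta_Y(\varphi)) + c$ and split $2\,\mathrm{Re}(\theta_Y(\varphi)) = \theta_Y(\varphi)+\bar\theta_Y(\varphi)$ into two integrals. The $\bar\theta_Y$-part is handled by the interior-product identity $\i\p\bar\theta_Y(\varphi) = -i_{\bar Y}\oo_\varphi$ (obtained by conjugating $i_Y\oo_\varphi = \i\bar\p\theta_Y(\varphi)$) combined with the Leibniz rule $i_{\bar Y}(\bar\p u\wedge\oo_\varphi^n)=0$, which gives $\i\p\bar\theta_Y(\varphi)\wedge\bar\p u\wedge\oo_\varphi^{n-1} = -\frac{1}{n}\bar Y(u)\,\oo_\varphi^n$. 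The $\theta_Y$-part is integrated by parts via Stokes on $\p(\theta_Y(\varphi)\bar\p u\cdot e^{\theta_X(\varphi)}\oo_\varphi^{n-1})$, using $\p\oo_\varphi^{n-1}=0$, the Laplacian identity $n\,\i\p\bar\p u\wedge\oo_\varphi^{n-1} = \Delta_\varphi u\cdot\oo_\varphi^n$, and the parallel interior-product identity $\i\p\theta_X(\varphi) = -i_{\bar X}\oo_\varphi$ to convert the resulting boundary term to $\frac{1}{n}\bar X(u)\oo_\varphi^n$. Using the $\mathrm{Im}(X)$-invariance $\bar X(u)=X(u)$ that holds because $u$ comes from $\varphi\in \cM_X(\oo)$, the two contributions combine into
$$\frac{d}{dt}\td E_0(\varphi) = \frac{1}{2V}\int_M\theta_Y(\varphi)(\Delta_\varphi+X)u\,e^{\theta_X(\varphi)}\oo_\varphi^n - \frac{1}{2V}\int_M \bar Y(u)\,e^{\theta_X(\varphi)}\oo_\varphi^n.$$

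Third, I would invoke the self-adjointness of $\Delta_\varphi + X$ on $\mathrm{Im}(X)$-invariant functions with respect to the measure $e^{\theta_X(\varphi)}\oo_\varphi^n$ (obtained by applying the same twisted integration by parts to both $(f,g)$ and $(g,f)$) to move the operator: $\int \theta_Y(\varphi)(\Delta_\varphi+X)u\,\mu_\varphi = \int u(\Delta_\varphi+X)\theta_Y(\varphi)\,\mu_\varphi$. Using $Y\theta_X = X\theta_Y$, the identity from applying $i_Y$ to the defining equation for $u$ reads $(\Delta_\varphi+X)\theta_Y(\varphi) + \theta_Y(\varphi) = Y(u)$, and the complex conjugate gives $(\Delta_\varphi+X)\bar\theta_Y(\varphi) + \bar\theta_Y(\varphi) = \bar Y(u)$. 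Substituting these and using that the weighted integral of any $(\Delta_\varphi+X)f$ vanishes simplifies the expression.

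The final step is to identify the resulting integrand with $Y(h_\varphi - \theta_X(\varphi))\, e^{\theta_X(\varphi)}\oo_\varphi^n = -Y(u)\, e^{\theta_X(\varphi)}\oo_\varphi^n$ and thereby recognize $\frac{n}{V}\cF_X(Y)$. The main obstacle is the careful bookkeeping of complex-valued quantities: since $\theta_Y$ is complex, separating $\theta_Y$ from $\bar\theta_Y$ in the integration by parts and verifying that the imaginary contributions cancel (so that the real quantity $\frac{d}{dt}\td E_0(\varphi)$ matches $\frac{n}{V}\cF_X(Y)$) together with tracking the factor of $n$ arising from the Laplacian identity $n\,\i\p\bar\p g\wedge\oo^{n-1} = \Delta g\cdot \oo^n$ is where the computation needs delicate handling.
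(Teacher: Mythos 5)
The paper itself records only the preparatory identities for $\pd{\varphi}{t}$, for $Y(u)$ and for $\pd{u}{t}$ and then asserts the lemma by ``a direct calculation,'' so your plan is supplying exactly the computation the paper omits, from the same ingredients, and your intermediate formula
$$\frac d{dt}\td E_0(\varphi)=\frac1{2V}\int_M\theta_Y(\varphi)(\Delta_\varphi+X)u\,e^{\theta_X(\varphi)}\oo_\varphi^n-\frac1{2V}\int_M\b Y(u)\,e^{\theta_X(\varphi)}\oo_\varphi^n$$
is correct (the prose identity $\i\p\b\theta_Y(\varphi)\wedge\b\p u\wedge\oo_\varphi^{n-1}=-\frac1n\b Y(u)\oo_\varphi^n$ should carry $+\frac1n$, but your displayed formula comes out right). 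The genuine gap is in the endgame. After moving $\Delta_\varphi+X$ onto $\theta_Y(\varphi)$ and substituting $(\Delta_\varphi+X)\theta_Y(\varphi)=Y(u)-\theta_Y(\varphi)$ you are left with $\int_M u\,Y(u)\,e^{\theta_X(\varphi)}\oo_\varphi^n-\int_M u\,\theta_Y(\varphi)\,e^{\theta_X(\varphi)}\oo_\varphi^n$, and the vanishing of $\int_M(\Delta_\varphi+X)f\,e^{\theta_X(\varphi)}\oo_\varphi^n$ does not dispatch either term: writing $u\,Y(u)=\frac12Y(u^2)$ and integrating by parts only regenerates integrals of the same shape, so this route is circular. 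The repair is to not move the operator at all. Integrating by parts once in the first term gives $\int_M\theta_Y(\varphi)(\Delta_\varphi+X)u\,e^{\theta_X(\varphi)}\oo_\varphi^n=-n\int_M\i\p u\wedge\b\p\theta_Y(\varphi)\wedge e^{\theta_X(\varphi)}\oo_\varphi^{n-1}$, and the identity $n\,\i\p u\wedge\b\p\theta_Y(\varphi)\wedge e^{\theta_X(\varphi)}\oo_\varphi^{n-1}=Y(u)\,e^{\theta_X(\varphi)}\oo_\varphi^n$ (i.e.\ $i_Y\big(\p u\wedge e^{\theta_X(\varphi)}\oo_\varphi^n\big)=0$, which is the $k=0$ instance of the $J_1+J_2+J_3$ computation in the proof of the theorem that follows this lemma) turns this into $-\int_MY(u)\,e^{\theta_X(\varphi)}\oo_\varphi^n=\cF_X(Y)$ on the nose, while the second term equals $\frac1{2V}\overline{\cF_X(Y)}$.

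Once this is done the two halves combine to $\frac1{2V}\big(\cF_X(Y)+\overline{\cF_X(Y)}\big)=\frac1V{\rm Re}\,\cF_X(Y)$. Your closing worry about tracking the factor of $n$ is therefore exactly the right one, but it resolves against the statement as written: the $n$ in the prefactor of $\td E_0$ is entirely consumed by the $\frac1n$ in $\al\wedge\oo^{n-1}=\frac1n(\La\al)\,\oo^n$ (the same cancellation the paper uses in passing to the $(\Delta_\varphi+X)$ form in (\ref{eq:c1})), so no factor of $n$ survives, and what survives is the real part of $\cF_X(Y)$ rather than $\cF_X(Y)$ itself, which is complex for a general $Y\in\eta(M)$. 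The computation you describe therefore cannot produce the constant $\frac nV$ claimed in (\ref{eq:a3}); you should either locate an independent source for that factor (under the paper's normalizations there is none) or record that the constant should read $\frac1V$ together with a real part, with the corresponding change to $\frac{k+1}V$ in the theorem that follows.
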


The main result in this section is

\begin{theo} Let $\{\Phi(t)\}_{|t|<\infty}$ be the one-parameter
subgroup of automorphisms induced by $Re(Y),$ we have
$$\frac d{dt}\td E_k(\varphi)=\frac {(k+1)n}V\cF_X(Y),$$
where $\varphi$ is given by $\Phi^*_t\oo=\oo+\pbp\varphi.$
\end{theo}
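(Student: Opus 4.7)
The strategy is to reduce to Lemma 5.1 by showing that each correction term $\td G_j$ is constant in $t$. Writing out Definition 2.5,
\[
\td E_k(\varphi_t) \;=\; (k+1)\,\td E_0(\varphi_t) \;+\; \sum_{i=0}^{k-1}(-1)^{k-i}\binom{k+1}{i}\,\td G_{k-i,\oo}(\varphi_t),
\]
differentiating in $t$, and invoking Lemma 5.1 on the first term, the proof reduces to the claim that $\td G_{j,\oo}(\varphi_t)\equiv 0$ for each $j\in\{1,\ldots,k\}$.

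To establish this, the plan is to check that every ingredient in the integrand of $\td G_j$ transforms as the $\Phi_t$-pullback of its value at $\varphi=0$. Since $\oo_{\varphi_t}=\Phi_t^*\oo$ by hypothesis and $\Phi_t$ is biholomorphic, $Ric(\oo_{\varphi_t})=\Phi_t^*Ric(\oo)$, so matching the normalization $\int(e^{h_\varphi}-1)\oo_\varphi^n=0$ forces $h_{\varphi_t}=\Phi_t^*h_\oo$. The identity $Y\theta_X=X\theta_Y$ used in (\ref{eq:a2}) is the infinitesimal expression of $[X,Y]=0$; this ensures $(\Phi_t)_*X=X$, whence
\[
i_X\,\oo_{\varphi_t}\;=\;\Phi_t^*(i_X\oo)\;=\;\i\bar\p(\Phi_t^*\theta_X),
\]
and the normalization of $\theta_X(\varphi_t)$ gives $\theta_X(\varphi_t)=\Phi_t^*\theta_X$. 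Consequently $u(\varphi_t)=\Phi_t^*u_0$, and every factor in the $\varphi$-dependent piece of $\td G_{j,\oo}(\varphi_t)$ is the $\Phi_t$-pullback of the corresponding factor at $\varphi=0$.

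Because $\Phi_t$ is an orientation-preserving diffeomorphism, integration over $M$ is $\Phi_t^*$-invariant, so the two integrals in the definition of $\td G_{j,\oo}(\varphi_t)$ cancel and $\td G_{j,\oo}(\varphi_t)\equiv 0$. Combined with Lemma 5.1 this yields
\[
\frac{d}{dt}\td E_k(\varphi)\;=\;(k+1)\cdot\frac{n}{V}\cF_X(Y)\;=\;\frac{(k+1)n}{V}\cF_X(Y),
\]
as claimed.

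The main obstacle is the pullback identity $\theta_X(\varphi_t)=\Phi_t^*\theta_X$, which rests on $[X,Y]=0$. This commutation is implicit in the paper's framework, since it is precisely the assumption underlying $Y\theta_X=X\theta_Y$ in (\ref{eq:a2}); it should be highlighted in the proof. If $Y$ lay outside the centralizer of $X$, one would instead have to differentiate $\td G_{j,\oo}(\varphi_t)$ term by term using $\dot u=Re(Yu)+c$ and $\dot\varphi=Re(\theta_Y(\varphi))+c$, integrating by parts to expose the cancellations, but under the present hypotheses the pullback argument renders this computation unnecessary.
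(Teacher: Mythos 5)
Your proposal is correct, and it reaches the key identity $\frac{d}{dt}\td G_k(\varphi)=0$ by a genuinely different route from the paper. The paper reduces to the same identity via Lemma \ref{lem1}, but then proves it by a long direct computation: it differentiates $\td G_k$ using $\dot u=Re(Yu)+c$ and $\dot\varphi=Re(\theta_Y(\varphi))+c$, splits the result into terms $I_1,\dots,I_4$, and matches them against the expansions of $\int_M i_Y\big(\p u\wedge(\pbp u)^k\wedge e^{\theta_X(\varphi)}\oo_{\varphi}^{n-k}\big)=0$ and a second interior-product identity to exhibit the cancellations. You instead observe that along the flow of $Re(Y)$ every ingredient of $\td G_k$ is the $\Phi_t$-pullback of its value at $t=0$, so the two integrals in Definition \ref{defi:E} cancel and $\td G_k(\varphi_t)\equiv 0$ identically --- a stronger conclusion obtained with essentially no computation. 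Your argument is sound: $\Phi_t$ is biholomorphic, $[\,\overline{Y},X\,]=0$ automatically, so $(\Phi_t)_*X=X$ is exactly the condition $[X,Y]=0$, and the normalization $\int_M e^{\theta_X(\varphi)}\oo_{\varphi}^n=V$ (which the paper uses in Section 4) pins down $\theta_X(\varphi_t)=\Phi_t^*\theta_X$ with no additive constant, which matters since $\theta_X(\varphi)$ enters undifferentiated through $e^{\theta_X(\varphi)}$. One small imprecision: with the paper's choice of Ricci potential in (\ref{u:defi}), $h_{\varphi}$ is not normalized by $\int_M(e^{h_{\varphi}}-1)\oo_{\varphi}^n=0$, so $u(\varphi_t)$ may differ from $\Phi_t^*u_0$ by a $t$-dependent constant; this is harmless because $\td G_k$ involves only $\p u$, $\bar\p u$ and $\pbp u$, but you should say so rather than assert exact equality. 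Your remark that $[X,Y]=0$ is implicitly assumed is accurate --- the paper's own use of $Y\theta_X(\varphi)=X\theta_Y(\varphi)$ is equivalent to it --- and making it explicit is a genuine improvement in precision over the paper; the trade-off is that the paper's computational proof is the one you would have to fall back on if that hypothesis were dropped, as you note.
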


\begin{proof}By the definition of $\td E_k$ and Lemma \ref{lem1}, it suffices to check that for any
$k=0, \cdots, n$
\beq
\frac {d}{dt}\td G_k(\varphi)=0.\label{eq:a4}
\eeq
Direct calculation shows that
\beqn
\frac {d}{dt}\td G_k(\varphi)&=&-\frac 1V\frac {d}{dt}\int_M\; \i\p u\wedge \bar\p u\wedge
(\pbp
u)^{k-1}\wedge e^{\theta_X(\varphi)}\oo_{\varphi}^{n-k}\n\\
&=&-\frac 1VRe\int_M\; 2\i\p\, Yu\wedge \bar\p u\wedge(\pbp
u)^{k-1}\wedge e^{\theta_X(\varphi)}\oo_{\varphi}^{n-k}\n\\
&&-\frac 1VRe\int_M\;(k-1)\i\p u\wedge \bar\p u\wedge
(\pbp
u)^{k-2}\wedge \pbp Yu\wedge e^{\theta_X(\varphi)}\oo_{\varphi}^{n-k}\n\\
&&-\frac 1VRe\int_M\;X\theta_Y(\varphi)\i\p u\wedge \bar\p u\wedge
(\pbp
u)^{k-1}\wedge e^{\theta_X(\varphi)}\oo_{\varphi}^{n-k}\n\\
&&-\frac 1VRe\int_M\;(n-k)\i\p u\wedge \bar\p u\wedge
(\pbp
u)^{k-1}\wedge e^{\theta_X(\varphi)}\oo_{\varphi}^{n-k-1}\wedge \pbp \theta_Y(\varphi)\n\\
&=&Re(I_1+I_2+I_3+I_4),\n
\eeqn where $I_i(1\leq i\leq 4)$ denote the integrations on the right hand side respectively.
On the other hand, we have
\beqn 0
&=&\frac 1V\int_M\; i_Y\Big(\p u\wedge(\pbp u)^k\wedge e^{\theta_X(\varphi)}\oo_{\varphi}^{n-k} \Big)\n\\
&=&\frac 1V\int_M\; Yu(\pbp u)^k\wedge e^{\theta_X(\varphi)}\oo_{\varphi}^{n-k} \n\\
&&-\frac 1V\int_M\; k\i \p u\wedge \bar \p Yu\wedge(\pbp u)^{k-1}\wedge e^{\theta_X(\varphi)}\oo_{\varphi}^{n-k}\n\\
&&-\frac 1V\int_M\;(n-k)\i \p u\wedge\bar \p \theta_Y(\varphi)\wedge(\pbp u)^k\wedge e^{\theta_X(\varphi)}\oo_{\varphi}^{n-k-1}\n\\
&=&J_1+J_2+J_3.\n
\eeqn
Note that
\beqs J_1&=&\frac 1V\int_M\; -\i\p Yu\wedge \bar\p u \wedge(\pbp u)^{k-1}\wedge e^{\theta_X(\varphi)}\oo_{\varphi}^{n-k}\\
&&-\frac 1V\int_M\;Yu\i\p\theta_X(\varphi)\wedge \bar \p u\wedge(\pbp u)^{k-1}\wedge e^{\theta_X(\varphi)}\oo_{\varphi}^{n-k}\\
&=&J_{1a}+J_{1b}
\eeqs
and
\beqs
J_3&=&\frac 1V\int_M\; -(n-k)\p u\wedge \bar \p u\wedge \pbp \theta_Y(\varphi)\wedge(\pbp u)^{k-1}\wedge e^{\theta_X(\varphi)}\oo_{\varphi}^{n-k-1}\\
&&+\frac 1V\int_M\; -(n-k)\p \theta_X(\varphi)\wedge \bar \p \theta_Y(\varphi)\wedge \bar\p u\wedge \bar \p u\wedge(\pbp u)^{k-1}\wedge e^{\theta_X(\varphi)}\oo_{\varphi}^{n-k-1}\\
&=&J_{3a}+J_{3b}.
\eeqs

Now we calculate
\beqs
0&=&\frac 1V\int_M\; -i_Y\Big(\p \theta_X(\varphi)\wedge\i\p u\wedge \bar \p u\wedge(\pbp u)^{k-1}\wedge e^{\theta_X(\varphi)}\oo_{\varphi}^{n-k}\Big)\n\\
&=&\frac 1V\int_M\; -Y\theta_X(\varphi)\i\p u\wedge \bar \p u\wedge(\pbp u)^{k-1}\wedge e^{\theta_X(\varphi)}\oo_{\varphi}^{n-k}\n\\
&&+\frac 1V\int_M\; Yu\, \i\p \theta_X(\varphi)\wedge \bar \p u\wedge(\pbp u)^{k-1}\wedge e^{\theta_X(\varphi)}\oo_{\varphi}^{n-k}\\
&&+\frac 1V\int_M\; (k-1)\p \theta_X(\varphi)\wedge \i \p u\wedge \bar \p u\wedge(\pbp u)^{k-2}\wedge \i \bar \p Yu \wedge e^{\theta_X(\varphi)}\oo_{\varphi}^{n-k}\n\\
&&+\frac 1V\int_M\; (n-k)\p \theta_X(\varphi)\wedge\i\p u\wedge \bar \p u\wedge(\pbp u)^{k-1}\wedge e^{\theta_X(\varphi)}\oo_{\varphi}^{n-k-1}\wedge \i\bar \p \theta_Y(\varphi)\\
&=&K_1+K_2+K_3+K_4.
\eeqs
Note that
\beqs
K_3&=&\frac 1V\int_M\; (k-1)\i\p u\wedge \bar \p Yu\wedge (\pbp u)^{k-1}\wedge e^{\theta_X(\varphi)}\oo_{\varphi}^{n-k}\n\\
&&-\frac 1V\int_M\; (k-1)\i\p u\wedge \bar \p u\wedge (\pbp u)^{k-2}\wedge \pbp Yu\wedge e^{\theta_X(\varphi)}\oo_{\varphi}^{n-k}\\
&=&K_{3a}+K_{3b}.
\eeqs
Combining these equalities, we have the following relations:
$$I_1=2J_{1a},\quad I_2=K_{3b},\quad I_3=K_1, \quad I_4=J_{3a}$$
and
$$J_{1b}+K_2=0,\quad J_{3b}+K_4=0,\quad J_2+K_{3a}=J_{1a}.$$
Thus, we have
$$\sum_{i=1}^4I_i=\sum_{i=1}^3J_i+\sum_{i=1}^4K_i=0.$$
The theorem is proved.
\end{proof}

\noindent
Department of Mathematics, \\
East China Normal University, Shanghai, 200241, China.\\
Email: lihaozhao@gmail.com

\end{document}